\documentclass{amsart}
%%%%%%%%%%%%%%%%
%%%%% font %%%%%
%%%%%%%%%%%%%%%%
\usepackage[T1]{fontenc}

\usepackage{amsmath}
\usepackage{amssymb}
\usepackage{mathrsfs}

%%%%%%%%%%%%%%%%%%%%
%%%%% pictures %%%%%
%%%%%%%%%%%%%%%%%%%%
\usepackage{float}
\usepackage[dvips]{graphicx,color}
 
\usepackage{ascmac}

%%%%%%%%%%%%%%%%%
%%%%% table %%%%%
%%%%%%%%%%%%%%%%%
\usepackage{tabularx}

%%%%%%%%%%%%%%%%
%%%%% TikZ %%%%%
%%%%%%%%%%%%%%%%
\usepackage{tikz}
\usetikzlibrary{calc,decorations.markings}

%%%%%%%%%%%%%%%%%%%%%%%%%%
%%%%% Theorem styles %%%%%
%%%%%%%%%%%%%%%%%%%%%%%%%%
\usepackage{amsthm}
\theoremstyle{definition}
\newtheorem{THM}{Theorem}
\newtheorem{LEM}[THM]{Lemma}
\newtheorem{PROP}[THM]{Proposition}

\newtheorem{DEF}[THM]{Definition}

\newtheorem{QUE}[THM]{Question}
\newtheorem*{THM*}{Theorem}
\newtheorem*{LEM*}{Lemma}
\newtheorem*{PROP*}{Proposition}
\newtheorem*{COR*}{Corollary}
\newtheorem*{DEF*}{Definition}
\newtheorem*{RMK*}{Remark}
\newtheorem*{EX*}{Example}

\numberwithin{figure}{section}
\numberwithin{equation}{section}
\numberwithin{THM}{section}

%%%%%%%%%%%%%%%%%
%%%%% title %%%%%
%%%%%%%%%%%%%%%%%
\title{A $q$-series identity via the $\mathfrak{sl}_3$ colored Jones polynomials for the $(2,2m)$-torus link}
\author{Wataru Yuasa}
\address{Department of Mathematics\\
  Tokyo Institute of Technology\\
  2-12-1 Ookayama, Meguro-ku, Tokyo 152-8551, Japan}
\email[]{yuasa.w.aa@m.titech.ac.jp}
%\date{\today}
%\subjclass[2010]{57M27, 11P84, 05A30}
%\keywords{colored Jones polynomial; Rogers-Rmanujan identities; skein theory.}

%11P84  	Partition identities; identities of Rogers-Ramanujan type
%05A30  	$q$-calculus and related topics [See also 33Dxx]
%57M05  	Fundamental group, presentations, free differential calculus
%57M27  	Invariants of knots and 3-manifolds

\begin{document}
%%%%%%%%%%%%%%%%%%%%
%%%%% abstract %%%%%
%%%%%%%%%%%%%%%%%%%%
\begin{abstract}
The colored Jones polynomial is a $q$-polynomial invariant of links colored by irreducible representations of a simple Lie algebra. 
A $q$-series called a tail is obtained as the limit of the $\mathfrak{sl}_2$ colored Jones polynomials $\{J_n(K;q)\}_n$ for some link $K$, for example, an alternating link. 
For the $\mathfrak{sl}_3$ colored Jones polynomials, 
the existence of a tail is unknown. 
We give two explicit formulas of the tail of the $\mathfrak{sl}_3$ colored Jones polynomials colored by $(n,0)$ for the $(2,2m)$-torus link. 
These two expressions of the tail provide an identity of $q$-series. 
This is a knot-theoretical generalization of the Andrews-Gordon identities for the Ramanujan false theta function.
\end{abstract}
\maketitle

%%%%%%%%%%%%%%%%%%%%%%%%
%%%%% middle arrow %%%%%
%%%%%%%%%%%%%%%%%%%%%%%%
\tikzset{->-/.style={decoration={
  markings,
  mark=at position #1 with {\arrow[black,thin]{>}}},postaction={decorate}}}
\tikzset{-<-/.style={decoration={
  markings,
  mark=at position #1 with {\arrow[black,thin]{<}}},postaction={decorate}}}

%%%%%%%%%%%%%%%%%%%%%%
%%%% triple line %%%%%
%%%%%%%%%%%%%%%%%%%%%%
\tikzset{
    triple/.style args={[#1] in [#2] in [#3]}{
        #1,preaction={preaction={draw,#3},draw,#2}
    }
}

%%%%%%%%%%%%%%%%%%%%%
%%%%% body text %%%%%
%%%%%%%%%%%%%%%%%%%%%
\section{Introduction}
The Rogers-Ramanujan identities were first discovered and proved by Rogers~\cite{Rogers1894}. 
After that, these identities appeared in Ramanujan's first letter to Hardy without proof. 
To this day, 
the Rogers-Ramanujan identities have been generalized by many people and have been proved in many ways.
One of these generalizations is the Andrews-Gordon identities.
\begin{THM}[The Andrews-Gordon identities for the Ramanujan theta function~\cite{Andrews74}]\label{theta}
\[ f(-q^{2m},-q)=(q;q)_\infty\sum_{k_m\leq\dots\leq k_2\leq k_1}\frac{q^{\sum_{j=1}^{m-1}k_j(k_j+1)}}{\prod_{j=1}^{m-1}(q;q)_{k_j-k_{j+1}}},
\]
where $m>0$ and $f(a,b)=\sum_{i=0}^\infty a^{\frac{i(i+1)}{2}}b^{\frac{i(i-1)}{2}}+\sum_{i=1}^\infty a^{\frac{i(i-1)}{2}}b^{\frac{i(i+1)}{2}}$ is Ramanujan's general theta function (see, for example, \cite{AndrewsBerndt05}).
\end{THM}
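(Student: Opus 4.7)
The plan is to recognise the stated identity as the first Andrews--Gordon identity (of modulus $2m+1$, index $a=1$) wrapped up in Ramanujan's theta function, and to prove it by iterating Bailey's lemma.

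First, I would apply the Jacobi triple product $f(a,b)=(-a;ab)_\infty(-b;ab)_\infty(ab;ab)_\infty$ to obtain
\[
f(-q^{2m},-q)=(q;q^{2m+1})_\infty(q^{2m};q^{2m+1})_\infty(q^{2m+1};q^{2m+1})_\infty.
\]
The residue classes $0,\pm 1\pmod{2m+1}$ are precisely the ones factored out of $(q;q)_\infty$ here, so dividing both sides by $(q;q)_\infty$ turns the claim into the standard form
\[
\prod_{n\not\equiv 0,\pm 1\pmod{2m+1}}\frac{1}{1-q^n}=\sum_{0=k_m\le\cdots\le k_1}\frac{q^{\sum_{j=1}^{m-1}k_j(k_j+1)}}{\prod_{j=1}^{m-1}(q;q)_{k_j-k_{j+1}}}
\]
of the first Andrews--Gordon identity, where the implicit convention $k_m=0$ has been made explicit.

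Second, I would produce the multisum by iterating Bailey's lemma. Starting from the unit Bailey pair (the one with $\beta_n=\delta_{n,0}$) and sending the auxiliary parameters $\rho_1,\rho_2\to\infty$ at each step, one application of the lemma introduces a new summation index $k_j$ together with a summand factor $q^{k_j(k_j+1)}/(q;q)_{k_j-k_{j+1}}$. After $m-1$ iterations the summand assumes the stated shape, while the ambient prefactor reproduces the infinite product recovered above.

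The main technical obstacle is the exponent and Pochhammer bookkeeping: one must verify that the quadratic $q$-power emitted across $m-1$ Bailey steps is precisely $\sum_{j=1}^{m-1}k_j(k_j+1)$, and that the limits $\rho_1,\rho_2\to\infty$ commute with the nested sum at each stage. Once these points are settled the derivation is formal. An alternative route is Andrews' original combinatorial proof via Gordon's theorem on partitions with difference conditions, which is the source pointed to by the citation \cite{Andrews74}.
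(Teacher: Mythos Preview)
The paper does not contain a proof of this theorem. Theorem~\ref{theta} is stated in the introduction purely as background, with a citation to Andrews' 1974 paper, and is never revisited; the paper's own results concern the $\mathfrak{sl}_3$ colored Jones polynomial of torus links and the new identity of Theorem~4.3. So there is no ``paper's own proof'' against which to compare your attempt.

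That said, your sketch is a perfectly reasonable outline of a standard proof of the Andrews--Gordon identities. The reduction via the Jacobi triple product to the product over residues $\not\equiv 0,\pm 1\pmod{2m+1}$ is correct, and iterating Bailey's lemma from the unit Bailey pair with both specialising parameters sent to infinity is exactly the mechanism that produces the nested sum with exponent $\sum_j k_j(k_j+1)$ and denominators $(q;q)_{k_j-k_{j+1}}$. The bookkeeping caveats you flag (tracking the quadratic exponent through $m-1$ steps, and justifying the limits) are the right places to be careful, but they are routine. Andrews' original 1974 argument, which you also mention, proceeds instead through an analytic generalisation and a functional-equation/recursion approach rather than the Bailey-chain formalism (which was systematised later), so if you want to match the cited source you would follow that line; but either route is acceptable for establishing the identity.
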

In knot theory, the Andrews-Gordon identities appear through the ($\mathfrak{sl}_2$) colored Jones polynomial. 
The colored Jones polynomial is a $q$-polynomial invariant of knots and links.
Dasbach and Lin showed the stability of some coefficients of the colored Jones polynomials for alternating knots in \cite{DasbachLin06,DasbachLin07}. 
They conjectured that the $k$-th coefficient of the $n+1$ dimensional colored Jones polynomial $J_n(K;q)$ for an alternating knot $K$ coincides up to sign for $n\geq k$. 
This suggests the existence of $q$-series $\sum_{k=0}^\infty a_kq^k$ for an alternating link $K$ called the {\em tail}. The coefficient $a_k$ is given by the $k$-th coefficient of $J_n(K;q)$ with $n\geq k$.
Armond~\cite{Armond13} proved the existence of tails for adequate links,
in particular for alternating knots.
Independently, for alternating knots, Garoufalidis and L{\^e}~\cite{GaroufalidisLe15} showed a more general stability of coefficients of $J_n(K;q)$. 
In~\cite{ArmondDasbach11}, 
Armond and Dasbach showed two explicit formulas for the tails of the colored Jones polynomial for the $(2,2m+1)$-torus knot. 
These two formulas conclude the left- and right-hand side of the Andrews-Gordon identities for the Ramanujan theta function. 
They used the formula for the colored Jones polynomial of the $(2,2m+1)$-torus knot in~\cite{Morton95} and obtained by the method of Armond~\cite{Armond14}. 
Hajij~\cite{Hajij16} also showed the Andrews-Gordon identities for the Ramanujan theta function and the Ramanujan false theta function using two expressions of the tail of the $(2,2m+1)$-torus knot and the $(2,2m)$-torus link, respectively. 
\begin{THM}[The Andrews-Gordon identities for the Ramanujan false theta function~\cite{Hajij16}]\label{false}
\[
\Psi(q^{2m-1},q)=(q;q)_\infty\sum_{k_{m-1}\leq\dots\leq k_2\leq k_1}\frac{q^{\sum_{j=1}^{m-1}k_j(k_j+1)}}{(q;q)_{k_{m-1}}^2\prod_{j=1}^{m-2}(q;q)_{k_j-k_{j+1}}},
\]
where $m>1$ and $\Psi(a,b)=\sum_{i=0}^\infty a^{\frac{i(i+1)}{2}}b^{\frac{i(i-1)}{2}}-\sum_{i=0}^\infty a^{\frac{i(i-1)}{2}}b^{\frac{i(i+1)}{2}}$ is Ramanujan's general false theta function (see, for example, \cite{McLaughlinSillsZimmer09}).
\end{THM}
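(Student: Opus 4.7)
The plan is to compute the tail $T_m(q)$ of the $\mathfrak{sl}_2$ colored Jones polynomial $J_n(T(2,2m);q)$ of the $(2,2m)$-torus link in two different ways, then equate the two resulting expressions. This is precisely the strategy used by Armond--Dasbach for the $(2,2m+1)$-torus knot; adapting it to the even case produces a false theta series rather than a theta series, which accounts for the difference between Theorem~\ref{theta} and Theorem~\ref{false}.

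First I would obtain a ``closed'' formula for $J_n(T(2,2m);q)$ using the Kauffman bracket skein module. Writing the link diagram as a chain of $m$ twist regions cabled by the Jones--Wenzl idempotent $p_n$, each twist can be expanded using the eigenvalue formula for a full twist on $p_a\otimes p_b$; after taking $n\to\infty$ coefficientwise the resulting single sum, once one normalizes by the expected lowest degree, should reduce by standard $q$-Pochhammer manipulations to $\Psi(q^{2m-1},q)/(q;q)_\infty$. This gives the left-hand side of the identity.

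Second, I would derive a nested-sum formula by expanding the $m$ twist regions one at a time via the Jones--Wenzl fusion/expansion identity. Each expansion introduces a summation index $k_j$ running over admissible colors for the internal edges, and the nesting constraint $k_{m-1}\le\dots\le k_1$ emerges from the admissibility conditions (triangle inequalities) that successive Jones--Wenzl projectors impose. The standard theta/trihedral coefficient evaluations, together with Armond's technique of collecting leading terms via head/tail approximation, should produce the summand $q^{\sum k_j(k_j+1)}/\bigl((q;q)_{k_{m-1}}^2\prod(q;q)_{k_j-k_{j+1}}\bigr)$, with the $(q;q)_\infty$ prefactor arising from the normalized quantum dimension in the limit $n\to\infty$.

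The principal obstacle is justifying the termwise $n\to\infty$ limit in each computation: one must show that the reductions in the Kauffman bracket skein category preserve the first $N$ coefficients of $J_n$ for all $n\ge N$, so that the two formal $q$-series genuinely represent the same tail. This requires the Armond stability theorem together with a careful accounting of which admissible triples contribute to the head of each bubble and which contribute only to higher-degree coefficients that vanish in the tail. Once both expressions are identified with $T_m(q)$, equating them and multiplying through by $(q;q)_\infty$ yields the identity of Theorem~\ref{false}.
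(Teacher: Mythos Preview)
The paper does not actually prove Theorem~\ref{false}; it is quoted from Hajij~\cite{Hajij16} as motivation, and the only information the paper gives about its proof is the single sentence that Hajij ``used formulas for the colored Jones polynomial obtained by using a colored trivalent graph representation of the Kauffman bracket skein elements \dots\ and the Kauffman bracket bubble skein expansion formula in~\cite{Hajij14A}.''  So there is no proof in this paper to compare your attempt against line by line.

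That said, your plan is precisely the strategy the paper attributes to Hajij, and it is also the exact $\mathfrak{sl}_2$ analogue of what the present paper carries out for $\mathfrak{sl}_3$ in Section~4: one formula for the colored Jones polynomial of $T(2,2m)$ comes from the trivalent-graph (fusion/eigenvalue) picture and yields the closed single-sum that limits to the false theta series, while the other comes from iterating a full-twist/bubble expansion and yields the $(m-1)$-fold nested sum; equating the two tails gives the identity.  One small wording issue: $T(2,2m)$ has a single block of $2m$ crossings, not ``$m$ twist regions''; what you really mean is expanding the $m$ full twists one at a time, which is what produces the $m-1$ nested summation indices and the chain of constraints $k_{m-1}\le\cdots\le k_1$.
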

He used formulas for the colored Jones polynomial obtained by using a colored trivalent graph representation of the Kauffman bracket skein elements (see, for example, \cite{KauffmanLins94}) and the Kauffman bracket bubble skein expansion formula in~\cite{Hajij14A}.

The main subject of this paper is the tail of the colored $\mathfrak{sl}_3$ Jones polynomials for the $(2,2m)$-torus link. 
The existence of tails of the $\mathfrak{sl}_3$ colored Jones polynomials for any class of knots and links is unknown. 
Moreover, 
there is no example of an explicit formula for a tail of the $\mathfrak{sl}_3$ colored Jones polynomial. 
We will give two explicit formulas of ``the $\mathfrak{sl}_3$ tail'' for the $(2,2m)$-torus link. 
As the result, 
the following $q$-series identity holds. 
\setcounter{section}{4}
\setcounter{THM}{2}
\begin{THM}
\footnotesize
\[
\sum_{i=0}^\infty q^{-2i}q^{m(i^2+2i)}\frac{(1-q^{i+1})^3(1+q^{i+1})}{1-q}=(q)_{\infty}\sum_{0\leq k_m\leq\dots\leq k_2\leq k_1}\frac{q^{-2k_m}q^{\sum_{j=1}^m(k_i^2+2k_i)}}{(q)_{k_m}^2(q)_{k_1-k_2}\dots(q)_{k_{m-1}-k_m}}.\]
\normalsize
\end{THM}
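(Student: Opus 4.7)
The plan is to establish the identity by computing the tail of the $\mathfrak{sl}_3$ colored Jones polynomial of the $(2,2m)$-torus link, colored by the symmetric representation $(n,0)$, in two essentially different skein-theoretic ways and then equating the resulting $q$-series. This follows the template used by Armond--Dasbach and Hajij for the $\mathfrak{sl}_2$ Andrews--Gordon identities, but transplanted to Kuperberg's $\mathfrak{sl}_3$ web calculus: each $(n,0)$-coloured strand will be represented by the $\mathfrak{sl}_3$ analogue of the Jones--Wenzl clasp on $n$ parallel oriented edges, and every crossing will be expanded into a linear combination of webs via the $R$-matrix.

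To produce the left-hand side, I would present the link as the closure of a 2-braid with $2m$ crossings in the annulus, cap each strand with the $(n,0)$-clasp, and expand the full twist $\sigma^{2m}$ globally as $\sum_i \lambda_i^{2m}\, p_i$, where $p_i$ is the idempotent projecting onto an internal edge of colour $(i,0)$ running through the braid. Each eigenvalue $\lambda_i$ contributes, after the standard framing correction, the monomial $q^{-2i}q^{m(i^2+2i)}$, while closing $p_i$ in the annulus gives a quantum-dimension-type evaluation producing the factor $(1-q^{i+1})^3(1+q^{i+1})/(1-q)$. Passing to $n\to\infty$ removes the upper cutoff on $i$ and yields the LHS.

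To produce the right-hand side, I would instead resolve the crossings locally, twist region by twist region, using an $\mathfrak{sl}_3$ bubble-type expansion in the spirit of Hajij's Kauffman bracket formula. A single full twist between two $(n,0)$-clasped strands decomposes into a sum over an index $k_j$ contributing a weight $q^{k_j^2+2k_j}$ together with reciprocals of Pochhammer factors that encode the triangularity of the clasp basis. Iterating this expansion inward through all $m$ twist regions forces $k_1\geq k_2\geq\dots\geq k_m\geq 0$; after $n\to\infty$, the outer $(n,0)$-clasps contribute the $(q)_\infty$ prefactor together with the $(q)_{k_m}^2$ denominator and the $q^{-2k_m}$ correction.

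The main obstacle will be establishing the $\mathfrak{sl}_3$ bubble expansion with its explicit $q$-coefficients and verifying its triangularity with respect to the clasp basis: the web calculus is considerably richer than the Temperley--Lieb algebra that drives the $\mathfrak{sl}_2$ arguments, so every local move must be reduced against Kuperberg's spider relations, typically forcing one to develop auxiliary clasped bigon and square identities. A secondary but essential point is to show that both computations genuinely stabilize as $n\to\infty$ in the sense needed for a tail to exist, so that the two resulting formal power series can be identified term by term and the stated identity follows.
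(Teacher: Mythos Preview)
Your plan is correct and matches the paper's argument: compute $J_{(n,0)}^{\mathfrak{sl}_3}(T(2,2m))$ in two ways---once via the projector/eigenvalue decomposition of the full twist (giving the single sum over $i$) and once via an iterated local full-twist expansion built from the $A_2$ bubble formula (giving the nested sum over $k_1\geq\dots\geq k_m$)---then normalize and pass to $n\to\infty$. One small correction: since the two clasped strands in the $2$-bridge presentation carry opposite orientations, the relevant decomposition is $V_{(n,0)}\otimes V_{(0,n)}\cong\bigoplus_{i=0}^n V_{(i,i)}$, so the internal edge is colored $(i,i)$ rather than $(i,0)$; with that adjustment the eigenvalue $q^{m(i^2+2i)}$ and the quantum-dimension factor $(1-q^{i+1})^3(1+q^{i+1})/(1-q)$ drop out exactly as you describe.
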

\setcounter{section}{1}
\setcounter{THM}{2}
The right-hand side of the above identity is obtained from the full twist formula in~\cite{Yuasa16}. 
The left-hand side is obtained by using of colored trivalent graph presentation of $A_2$ webs.

The paper is organized as follows. 
We first review definitions and formulas related to the $A_2$ web space in section~2. 
The $A_2$ web space is a generalization of the Kauffman bracket skein module. 
In section~3, we introduce a method to represent some types of $A_2$ webs using colored trivalent graphs. 
Furthermore, 
we give values of some $\theta$-graphs and quantum $6j$ symbols. 
As an application, 
we explicitly give the $\mathfrak{sl}_3$ colored Jones polynomial for a $2$-bridge link.
In section~4, 
we derive two $q$-series, that is ``the $\mathfrak{sl}_3$ tails'', by using explicit formulas for the $\mathfrak{sl}_3$ colored Jones polynomial for the $(2,2m)$-torus link in section~3 and in~\cite{Yuasa16}.

\section{The $A_2$ web space and some formulas}
The skein theory has been developed with a quantum representation of Lie algebra $\mathfrak{g}$. 
If $\mathfrak{g}=\mathfrak{sl}_2$, the skein theory consists of the Kauffman bracket skein module, which is called the Temperley-Lieb algebra and the $A_1$ web space, and the Kauffman bracket. 
Kuperberg constructed the skein theory for Lie algebras of rank $2$ in~\cite{Kuperberg94, Kuperberg96}. 
In this section, 
we review definitions of the $A_2$ web space, the $A_2$ bracket and the $A_2$ clasp.

Let $\varepsilon=(\varepsilon_1,\varepsilon_2,\dots,\varepsilon_m)$ be an $m$-tuple of signs ${+}$ or ${-}$. 
Let $D_\varepsilon$ denote the unit disk with signed marked points $\{\exp(2\pi\sqrt{-1}/m)^{j-1}\mid j=1,2,\dots,m \}$ on its boundary. 
The sign of $\exp(2\pi\sqrt{-1}/m)^{j-1}$ is given by $\varepsilon_j$ for $j=1,2,\dots,m$.
A {\em bipartite uni-trivalent graph} $G$ is a directed graph such that each vertex is either trivalent or univalent and the vertices are divided into the sinks and the sources. 
A sink (resp. source) is a vertex such that all edges adjoining to the vertex point into (resp. away from) it.
A {\em bipartite trivalent graph} $G$ in $D_{\varepsilon}$ is an embedding of a uni-trivalent graph into $D_\varepsilon$ such that any vertex $v$ has the following neighborhoods:
\[
\tikz[baseline=-.6ex]{
\draw [thin, dashed, fill=lightgray!50] (0,0) circle [radius=.5];
\draw[-<-=.5] (0:0) -- (90:.5);
\draw[-<-=.5] (0:0) -- (210:.5);
\draw[-<-=.5] (0:0) -- (-30:.5);
\node (v) at (0,0) [above right]{$v$};
\fill (0,0) circle [radius=1pt];
}
\text{\ or\ }
\tikz[baseline=-.6ex]{
\draw [thin, dashed] (0,0) circle [radius=.5];
\clip (0,0) circle [radius=.5];
\draw [thin, fill=lightgray!50] (0,-.5) rectangle (.5,.5);
\draw[-<-=.5] (0,0)--(.5,0);
\node (p) at (0,0) [left]{${+}$};
\node at (0,0) [above right]{$v$};
\draw [fill=cyan] (0,0) circle [radius=1pt];
}\text{\, if $v$ is a sink,}
\quad\tikz[baseline=-.6ex]{
\draw [thin, dashed, fill=lightgray!50] (0,0) circle [radius=.5];
\draw[->-=.5] (0:0) -- (90:.5);
\draw[->-=.5] (0:0) -- (210:.5);
\draw[->-=.5] (0:0) -- (-30:.5);
\node (v) at (0,0) [above right]{$v$};
\fill (0,0) circle [radius=1pt];
}
\text{\ or\ }
\tikz[baseline=-.6ex]{
\draw [thin, dashed] (0,0) circle [radius=.5];
\clip (0,0) circle [radius=.5];
\draw [thin, fill=lightgray!50] (0,-.5) rectangle (.5,.5);
\draw[->-=.5] (0,0)--(.5,0);
\node (p) at (0,0) [left]{${-}$};
\node at (0,0) [above right]{$v$};
\draw [fill=cyan] (0,0) circle [radius=1pt];
}\text{\, if $v$ is a source.}
\]
An {\em $A_2$ basis web} is the boundary-fixing isotopy class of a bipartite trivalent graph $G$ in $D_{\varepsilon}$, 
where any internal face of $D_{\varepsilon}\setminus G$ has at least six sides. 
Let us denote $B_\varepsilon$ the set of $A_2$ basis webs in $D_{\varepsilon}$.
For example,
$B_{(+,-,+,-,+,-)}$ has the following $A_2$ basis webs:
\[
\,\begin{tikzpicture}
\draw [thin, fill=lightgray!50] (0,0) circle [radius=.5];
\draw[-<-=.5] (0:.5) -- (180:.5);
\draw[->-=.5] (60:.5) to[out=-120, in=-60] (120:.5);
\draw[-<-=.5] (240:.5) to[out=60, in=120] (300:.5);
\foreach \i in {0,1,...,6} \draw[fill=cyan] ($(0,0) !1! \i*60:(.5,0)$) circle [radius=1pt];
\node at (0:.5) [right]{$\scriptstyle{+}$};
\node at (60:.5) [right]{$\scriptstyle{-}$};
\node at (120:.5) [left]{$\scriptstyle{+}$};
\node at (180:.5) [left]{$\scriptstyle{-}$};
\node at (240:.5) [left]{$\scriptstyle{+}$};
\node at (300:.5) [right]{$\scriptstyle{-}$};
\end{tikzpicture}\,,
\,\begin{tikzpicture}
\begin{scope}[rotate=60]
\draw [thin, fill=lightgray!50] (0,0) circle [radius=.5];
\draw[->-=.5] (0:.5) -- (180:.5);
\draw[-<-=.5] (60:.5) to[out=-120, in=-60] (120:.5);
\draw[->-=.5] (240:.5) to[out=60, in=120] (300:.5);
\foreach \i in {0,1,...,6} \draw[fill=cyan] ($(0,0) !1! \i*60:(.5,0)$) circle [radius=1pt];
\end{scope}
\node at (0:.5) [right]{$\scriptstyle{+}$};
\node at (60:.5) [right]{$\scriptstyle{-}$};
\node at (120:.5) [left]{$\scriptstyle{+}$};
\node at (180:.5) [left]{$\scriptstyle{-}$};
\node at (240:.5) [left]{$\scriptstyle{+}$};
\node at (300:.5) [right]{$\scriptstyle{-}$};
\end{tikzpicture}\,,
\,\begin{tikzpicture}
\begin{scope}[rotate=-60]
\draw [thin, fill=lightgray!50] (0,0) circle [radius=.5];
\draw[->-=.5] (0:.5) -- (180:.5);
\draw[-<-=.5] (60:.5) to[out=-120, in=-60] (120:.5);
\draw[->-=.5] (240:.5) to[out=60, in=120] (300:.5);
\foreach \i in {0,1,...,6} \draw[fill=cyan] ($(0,0) !1! \i*60:(.5,0)$) circle [radius=1pt];
\end{scope}
\node at (0:.5) [right]{$\scriptstyle{+}$};
\node at (60:.5) [right]{$\scriptstyle{-}$};
\node at (120:.5) [left]{$\scriptstyle{+}$};
\node at (180:.5) [left]{$\scriptstyle{-}$};
\node at (240:.5) [left]{$\scriptstyle{+}$};
\node at (300:.5) [right]{$\scriptstyle{-}$};
\end{tikzpicture}\,,
\,\begin{tikzpicture}
\draw [thin, fill=lightgray!50] (0,0) circle [radius=.5];
\draw[-<-=.5] (0:.5) to[out=180, in=-120] (60:.5);
\draw[rotate=120, -<-=.5] (0:.5) to[out=180, in=-120] (60:.5);
\draw[rotate=240, -<-=.5] (0:.5) to[out=180, in=-120] (60:.5);
\foreach \i in {0,1,...,6} \draw[fill=cyan] ($(0,0) !1! \i*60:(.5,0)$) circle [radius=1pt];
\node at (0:.5) [right]{$\scriptstyle{+}$};
\node at (60:.5) [right]{$\scriptstyle{-}$};
\node at (120:.5) [left]{$\scriptstyle{+}$};
\node at (180:.5) [left]{$\scriptstyle{-}$};
\node at (240:.5) [left]{$\scriptstyle{+}$};
\node at (300:.5) [right]{$\scriptstyle{-}$};
\end{tikzpicture}\,,
\,\begin{tikzpicture}
\begin{scope}[rotate=60]
\draw [thin, fill=lightgray!50] (0,0) circle [radius=.5];
\draw[->-=.5] (0:.5) to[out=180, in=-120] (60:.5);
\draw[rotate=120, ->-=.5] (0:.5) to[out=180, in=-120] (60:.5);
\draw[rotate=240, ->-=.5] (0:.5) to[out=180, in=-120] (60:.5);
\foreach \i in {0,1,...,6} \draw[fill=cyan] ($(0,0) !1! \i*60:(.5,0)$) circle [radius=1pt];
\end{scope}
\node at (0:.5) [right]{$\scriptstyle{+}$};
\node at (60:.5) [right]{$\scriptstyle{-}$};
\node at (120:.5) [left]{$\scriptstyle{+}$};
\node at (180:.5) [left]{$\scriptstyle{-}$};
\node at (240:.5) [left]{$\scriptstyle{+}$};
\node at (300:.5) [right]{$\scriptstyle{-}$};
\end{tikzpicture}\,,
\,\begin{tikzpicture}
\draw [thin, fill=lightgray!50] (0,0) circle [radius=.5];
\draw[-<-=.5] (0:.5) -- (0:.3);
\draw[rotate=60, ->-=.5] (0:.5) -- (0:.3);
\draw[rotate=120, -<-=.5] (0:.5) -- (0:.3);
\draw[rotate=180, ->-=.5] (0:.5) -- (0:.3);
\draw[rotate=240, -<-=.5] (0:.5) -- (0:.3);
\draw[rotate=300, ->-=.5] (0:.5) -- (0:.3);
\draw[->-=.5] (0:.3) -- (60:.3);
\draw[rotate=60, -<-=.5] (0:.3) -- (60:.3);
\draw[rotate=120, ->-=.5] (0:.3) -- (60:.3);
\draw[rotate=180, -<-=.5] (0:.3) -- (60:.3);
\draw[rotate=240, ->-=.5] (0:.3) -- (60:.3);
\draw[rotate=300, -<-=.5] (0:.3) -- (60:.3);
\foreach \i in {0,1,...,6} \draw[fill=cyan] ($(0,0) !1! \i*60:(.5,0)$) circle [radius=1pt];
\node at (0:.5) [right]{$\scriptstyle{+}$};
\node at (60:.5) [right]{$\scriptstyle{-}$};
\node at (120:.5) [left]{$\scriptstyle{+}$};
\node at (180:.5) [left]{$\scriptstyle{-}$};
\node at (240:.5) [left]{$\scriptstyle{+}$};
\node at (300:.5) [right]{$\scriptstyle{-}$};
\end{tikzpicture}\,.
\]
The {\em $A_2$ web space $W_\varepsilon$} is the $\mathbb{Q}(q^{\frac{1}{6}})$-vector space spanned by $B_\varepsilon$. 
A {\em tangled trivalent graph diagram} in $D_\varepsilon$ is an immersed bipartite uni-trivalent graph in $D_\varepsilon$ whose intersection points are only transverse double points of edges with crossing data 
\,\tikz[baseline=-.6ex, scale=.8]{
\draw [thin, dashed, fill=lightgray!50] (0,0) circle [radius=.5];
\draw[->-=.8] (-45:.5) -- (135:.5);
\draw[->-=.8, lightgray!50, double=black, double distance=0.4pt, ultra thick] (-135:.5) -- (45:.5);
}\, or 
\,\tikz[baseline=-.6ex, scale=.8]{
\draw [thin, dashed, fill=lightgray!50] (0,0) circle [radius=.5];
\draw[->-=.8] (-135:.5) -- (45:.5);
\draw[->-=.8, lightgray!50, double=black, double distance=0.4pt, ultra thick] (-45:.5) -- (135:.5);
}\, .
Tangled trivalent graph diagrams $G$ and $G'$ are regularly isotopic if $G$ is obtained from $G'$ by a finite sequence of boundary-fixing isotopies and Reidemeister moves, see Figure~\ref{Reidemeister}, with some direction of edges.
\begin{figure}
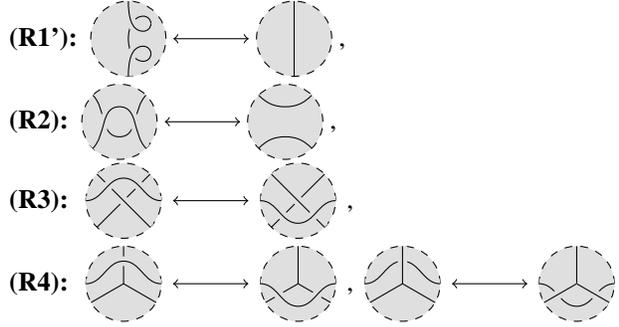

\begin{description}
\item[(R1')]
\tikz[baseline=-.6ex]{
\draw [thin, dashed, fill=lightgray!50] (0,0) circle [radius=.5];
\draw (.3,-.2)
to[out=south, in=east] (.2,-.3)
to[out=west, in=south] (.0,.0)
to[out=north, in=west] (.2,.3)
to[out=east, in=north] (.3,.2);
\draw[lightgray!50, double=black, double distance=0.4pt, ultra thick] (0,-.5) 
to[out=north, in=west] (.2,-.1)
to[out=east, in=north] (.3,-.2);
\draw[lightgray!50, double=black, double distance=0.4pt, ultra thick] (.3,.2)
to[out=south, in=east] (.2,.1)
to[out=west, in=south] (0,.5);
}
\tikz[baseline=-.6ex]{
\draw [<->, xshift=1.5cm] (1,0)--(2,0);
}
\tikz[baseline=-.6ex]{
\draw[xshift=3cm, thin, dashed, fill=lightgray!50] (0,0) circle [radius=.5];
\draw[xshift=3cm] (90:.5) to (-90:.5);
}\ ,
\item[(R2)]
\tikz[baseline=-.6ex]{
\draw [thin, dashed, fill=lightgray!50] (0,0) circle [radius=.5];
\draw (135:.5) to [out=south east, in=west](0,-.2) to [out=east, in=south west](45:.5);
\draw [lightgray!50, double=black, double distance=0.4pt, ultra thick](-135:.5) to [out=north east, in=left](0,.2) to [out=right, in=north west] (-45:.5);
}
\tikz[baseline=-.6ex]{
\draw [<->, xshift=1.5cm] (1,0)--(2,0);
}
\tikz[baseline=-.6ex]{
\draw[xshift=3cm, thin, dashed, fill=lightgray!50] (0,0) circle [radius=.5];
\draw[xshift=3cm] (135:.5) to [out=south east, in=west](0,.2) to [out=east, in=south west](45:.5);
\draw[xshift=3cm] (-135:.5) to [out=north east, in=left](0,-.2) to [out=right, in=north west] (-45:.5);
}\ ,
\item[(R3)]
\tikz[baseline=-.6ex]{
\draw [thin, dashed, fill=lightgray!50] (0,0) circle [radius=.5];
\draw (-135:.5) -- (45:.5);
\draw [lightgray!50, double=black, double distance=0.4pt, ultra thick] (135:.5) -- (-45:.5);
\draw[lightgray!50, double=black, double distance=0.4pt, ultra thick](180:.5) to [out=right, in=left](0,.3) to [out=right, in=left] (-0:.5);
}
\tikz[baseline=-.6ex]{
\draw[<->, xshift=1.5cm] (1,0)--(2,0);
}
\tikz[baseline=-.6ex]{
\draw [xshift=3cm, thin, dashed, fill=lightgray!50] (0,0) circle [radius=.5];
\draw [xshift=3cm] (-135:.5) -- (45:.5);
\draw [xshift=3cm, lightgray!50, double=black, double distance=0.4pt, ultra thick] (135:.5) -- (-45:.5);
\draw[xshift=3cm, lightgray!50, double=black, double distance=0.4pt, ultra thick](180:.5) to [out=right, in=left](0,-.3) to [out=right, in=left] (-0:.5);
}\ ,
\item[(R4)]
\tikz[baseline=-.6ex]{
\draw [thin, dashed, fill=lightgray!50] (0,0) circle [radius=.5];
\draw (0:0) -- (90:.5); 
\draw (0:0) -- (210:.5); 
\draw (0:0) -- (-30:.5);
\draw[lightgray!50, double=black, double distance=0.4pt, ultra thick](180:.5) to [out=right, in=left](0,.3) to [out=right, in=left] (-0:.5);
}
\tikz[baseline=-.6ex]{
\draw[<->, xshift=1.5cm] (1,0)--(2,0);
}
\tikz[baseline=-.6ex]{
\draw [thin, dashed, fill=lightgray!50] (0,0) circle [radius=.5];
\draw (0:0) -- (90:.5); 
\draw (0:0) -- (210:.5); 
\draw (0:0) -- (-30:.5);
\draw[lightgray!50, double=black, double distance=0.4pt, ultra thick](180:.5) to [out=right, in=left](0,-.3) to [out=right, in=left] (-0:.5);
}\ , 
\tikz[baseline=-.6ex]{
\draw [thin, dashed, fill=lightgray!50] (0,0) circle [radius=.5];
\draw[lightgray!50, double=black, double distance=0.4pt, ultra thick](180:.5) to [out=right, in=left](0,.3) to [out=right, in=left] (-0:.5);
\draw[lightgray!50, double=black, double distance=0.4pt, ultra thick] (0:0) -- (90:.5); 
\draw (0:0) -- (210:.5); 
\draw (0:0) -- (-30:.5);
}
\tikz[baseline=-.6ex]{
\draw[<->, xshift=1.5cm] (1,0)--(2,0);
}
\tikz[baseline=-.6ex]{
\draw [thin, dashed, fill=lightgray!50] (0,0) circle [radius=.5];
\draw[lightgray!50, double=black, double distance=0.4pt, ultra thick](180:.5) to [out=right, in=left](0,-.3) to [out=right, in=left] (-0:.5);
\draw (0:0) -- (90:.5); 
\draw[lightgray!50, double=black, double distance=0.4pt, ultra thick] (0:0) -- (210:.5); 
\draw[lightgray!50, double=black, double distance=0.4pt, ultra thick] (0:0) -- (-30:.5);
}.
\end{description}
\caption{The Reidemeister moves for tangled trivalent graph diagrams}
\label{Reidemeister}
\end{figure}

{\em Tangled trivalent graphs} in $D_\varepsilon$ are regular isotopy classes of tangled trivalent graph diagrams in $D_\varepsilon$. 
We denote $T_\varepsilon$ the set of tangled trivalent graphs in $D_\varepsilon$.

\begin{DEF}[The $A_2$ bracket~\cite{Kuperberg96}]
We define a $\mathbb{Q}(q^{\frac{1}{6}})$-linear map $\langle\,\cdot\,\rangle_3\colon\mathbb{Q}(q^{\frac{1}{6}})T_\varepsilon\to W_\varepsilon$ by the following.
\begin{itemize}
\item 
$\Big\langle\,\tikz[baseline=-.6ex, scale=0.8]{
\draw [thin, dashed, fill=lightgray!50] (0,0) circle [radius=.5];
\draw[->-=.8] (-45:.5) -- (135:.5);
\draw[->-=.8, lightgray!50, double=black, double distance=0.4pt, ultra thick] (-135:.5) -- (45:.5);
}\,\Big\rangle_{\! 3}
=
q^{\frac{1}{3}}\Big\langle\,\tikz[baseline=-.6ex, scale=0.8]{
\draw[thin, dashed, fill=lightgray!50] (0,0) circle [radius=.5];
\draw[->-=.5] (-45:.5) to [out=north west, in=south](.2,0) to [out=north, in=south west](45:.5);
\draw[->-=.5] (-135:.5) to [out=north east, in=south](-.2,0) to [out=north, in=south east] (135:.5);
}\,\Big\rangle_{\! 3}
-
q^{-\frac{1}{6}}\Big\langle\,\tikz[baseline=-.6ex, scale=0.8]{
\draw[thin, dashed, fill=lightgray!50] (0,0) circle [radius=.5];
\draw[->-=.5] (-45:.5) -- (0,-.2);
\draw[->-=.5] (-135:.5) -- (0,-.2);
\draw[-<-=.5] (0,-.2) -- (0,.2);
\draw[-<-=.5] (45:.5) -- (0,.2);
\draw[-<-=.5] (135:.5) -- (0,.2);
}\,\Big\rangle_{\! 3}
$,
\item 
$\Big\langle\,\tikz[baseline=-.6ex, scale=0.8]{
\draw [thin, dashed, fill=lightgray!50] (0,0) circle [radius=.5];
\draw[->-=.8] (-135:.5) -- (45:.5);
\draw[->-=.8, lightgray!50, double=black, double distance=0.4pt, ultra thick] (-45:.5) -- (135:.5);
}\,\Big\rangle_{\! 3}
=
q^{-\frac{1}{3}}\Big\langle\,\tikz[baseline=-.6ex, scale=0.8]{
\draw[thin, dashed, fill=lightgray!50] (0,0) circle [radius=.5];
\draw[->-=.5] (-45:.5) to [out=north west, in=south](.2,0) to [out=north, in=south west](45:.5);
\draw[->-=.5] (-135:.5) to [out=north east, in=south](-.2,0) to [out=north, in=south east] (135:.5);
}\,\Big\rangle_{\! 3}
-
q^{\frac{1}{6}}\Big\langle\,\tikz[baseline=-.6ex, scale=0.8]{
\draw[thin, dashed, fill=lightgray!50] (0,0) circle [radius=.5];
\draw[->-=.5] (-45:.5) -- (0,-.2);
\draw[->-=.5] (-135:.5) -- (0,-.2);
\draw[-<-=.5] (0,-.2) -- (0,.2);
\draw[-<-=.5] (45:.5) -- (0,.2);
\draw[-<-=.5] (135:.5) -- (0,.2);
}\,\Big\rangle_{\! 3}
$,
\item 
$\Big\langle\,\tikz[baseline=-.6ex, rotate=90, scale=0.8]{
\draw[thin, dashed, fill=lightgray!50] (0,0) circle [radius=.5];
\draw[->-=.6] (-45:.5) -- (-45:.3);
\draw[->-=.6] (-135:.5) -- (-135:.3);
\draw[-<-=.6] (45:.5) -- (45:.3);
\draw[->-=.6] (135:.5) -- (135:.3);
\draw[->-=.5] (45:.3) -- (135:.3);
\draw[-<-=.5] (-45:.3) -- (-135:.3);
\draw[->-=.5] (45:.3) -- (-45:.3);
\draw[-<-=.5] (135:.3) -- (-135:.3);
}\,\Big\rangle_{\! 3}
=
\Big\langle\,\tikz[baseline=-.6ex, rotate=90, scale=0.8]{
\draw[thin, dashed, fill=lightgray!50] (0,0) circle [radius=.5];
\draw[->-=.5] (-45:.5) to [out=north west, in=south](.2,0) to [out=north, in=south west](45:.5);
\draw[-<-=.5] (-135:.5) to [out=north east, in=south](-.2,0) to [out=north, in=south east] (135:.5);
}\,\Big\rangle_{\! 3}
+
\Big\langle\,\tikz[rotate=90, baseline=-.6ex, rotate=90, scale=0.8]{
\draw[thin, dashed, fill=lightgray!50] (0,0) circle [radius=.5];
\draw[-<-=.5] (-45:.5) to [out=north west, in=south](.2,0) to [out=north, in=south west](45:.5);
\draw[->-=.5] (-135:.5) to [out=north east, in=south](-.2,0) to [out=north, in=south east] (135:.5);
}\,\Big\rangle_{\! 3}
$,
\item 
$\Big\langle\,\tikz[baseline=-.6ex, rotate=-90, scale=0.8]{
\draw[thin, dashed, fill=lightgray!50] (0,0) circle [radius=.5];
\draw[->-=.5] (0,-.5) -- (0,-.25);
\draw[->-=.5] (0,.25) -- (0,.5);
\draw[-<-=.5] (0,-.25) to [out=60, in=120, relative](0,.25);
\draw[-<-=.5] (0,-.25) to [out=-60, in=-120, relative](0,.25);
}\,\Big\rangle_{\! 3}
=
\left[2\right]\Big\langle\,\tikz[baseline=-.6ex, rotate=-90, scale=0.8]{
\draw[thin, dashed, fill=lightgray!50] (0,0) circle [radius=.5];
\draw[->-=.5] (0,-.5) -- (0,.5);
}\,\Big\rangle_{\! 3}
$,
\item 
$
\Big\langle G\sqcup
\,\tikz[baseline=-.6ex, scale=0.8]{
\draw[thin, dashed, fill=lightgray!50] (0,0) circle [radius=.5];
\draw[->-=.5] (0,0) circle [radius=.3];
}\,\Big\rangle_{\! 3}
=
\left[3\right] \langle G\rangle_{3},
$
\end{itemize}
where $\left[n\right]=\frac{q^{\frac{n}{2}}-q^{-\frac{n}{2}}}{q^{\frac{1}{2}}-q^{-\frac{1}{2}}}$ is a quantum integer.
\end{DEF}
We remark that this map is invariant under the Reidemeister moves for tangled trivalent graphs.

We next consider the $A_2$ web space $W_{n^{+}+n^{-}}=W_{({+},{+},\dots,{+},{-},{-},\dots,{-})}$ whose first $n$ marked points are decorated with ${+}$ and next $n$ marked points are decorated with ${-}$.
We define $A_2$ clasps 
$\tikz[baseline=-.6ex]{
\draw[->-=.8] (-.5,0) -- (.5,0);
\draw[fill=white] (-.1,-.3) rectangle (.1,.3);
\node at (.1,0) [above right]{${\scriptstyle n}$};
}\,\in W_{n^{+}+n^{-}}
$ 
inductively by the following. 
\begin{DEF}(The $A_2$ clasps)
\begin{align}
\tikz[baseline=-.6ex]{
\draw[->-=.8] (-.5,0) -- (.5,0);
\draw[fill=white] (-.1,-.3) rectangle (.1,.3);
\node at (.1,0) [above right]{${\scriptstyle 1}$};
}\,
&= 
\,\tikz[baseline=-.6ex]{
\draw[->-=.5] (-.5,0) -- (.5,0) node at (0,0) [above]{${\scriptstyle 1}$};
}\,\in W_{1^{+}+1^{-}}\notag\\
\tikz[baseline=-.6ex]{
\draw[->-=.8] (-.5,0) -- (.5,0);
\draw[fill=white] (-.1,-.3) rectangle (.1,.3);
\node at (.1,0) [above right]{${\scriptstyle n}$};
}\,
&=
\bigg\langle\,\tikz[baseline=-.6ex]{
\draw[->-=.8] (-.5,.1) -- (.5,.1);
\draw[->-=.5] (-.5,-.4) -- (.5,-.4);
\draw[fill=white] (-.1,-.2) rectangle (.1,.4);
\node at (0,0) [above right]{${\scriptstyle n-1}$};
\node at (.2,-.4) [above right]{${\scriptstyle 1}$};
}\,\bigg\rangle_{\! 3}
-\frac{\left[n-1\right]}{\left[n\right]}
\bigg\langle\,\tikz[baseline=-.6ex]{
\draw[->-=.5] (-.9,.1) -- (-.4,.1);
\draw[->-=.5] (-.3,.2) -- (.3,.2);
\draw[->-=.5] (.4,.1) -- (.9,.1);
\draw[->-=.5] (-.9,-.4) 
to[out=east, in=west] (-.3,-.4) 
to[out=east, in=south] (-.1,-.2);
\draw[-<-=.8] (-.1,-.2) 
to[out=north, in=east] (-.3,0);
\draw[-<-=.5] (.9,-.4) 
to[out=west, in=east] (.3,-.4)
to[out=west, in=south] (.1,-.2);
\draw[->-=.8] (.1,-.2)
to[out=north, in=west] (.3,0);
\draw[fill=white] (-.4,-.2) rectangle (-.3,.4);
\draw[fill=white] (.3,-.2) rectangle (.4,.4);
\draw[-<-=.5] (-.1,-.2) -- (.1,-.2);
\node at (-.3,0) [above left]{${\scriptscriptstyle n-1}$};
\node at (.3,0) [above right]{${\scriptscriptstyle n-1}$};
\node at (0,.1) [above]{${\scriptscriptstyle n-2}$};
\node at (-.6,-.3) {${\scriptscriptstyle 1}$};
\node at (.6,-.3) {${\scriptscriptstyle 1}$};
\node at (0,-.2) [below]{${\scriptscriptstyle 1}$};
\node at (-.3,0) [right]{${\scriptscriptstyle 1}$};
\node at (.3,0) [left]{${\scriptscriptstyle 1}$};
}\,\bigg\rangle_{\! 3} \in W_{n^{+}+n^{-}}
\end{align}
\end{DEF}
$A_2$ clasps have the following properties.
\begin{LEM}[Properties of $A_2$ clasps]
For any positive integer $n$,
\begin{itemize} 
\item $\Big\langle\,\tikz[baseline=-.6ex]{
\draw[->-=.5] (-.6,0) -- (.6,0);
\draw[fill=white] (-.4,-.3) rectangle (-.2,.3);
\draw[fill=white] (.2,-.3) rectangle (.4,.3);
\node at (-.3,0) [above right]{${\scriptstyle n}$};
\node at (.3,0) [above right]{${\scriptstyle n}$};
}\,\Big\rangle_{\! 3}
=
\,\tikz[baseline=-.6ex]{
\draw[->-=.8] (-.5,0) -- (.5,0);
\draw[fill=white] (-.1,-.3) rectangle (.1,.3);
\node at (0,0) [above right] {${\scriptstyle n}$};
}\,
$,
\item $\Big\langle\,\tikz[baseline=-.6ex]{
\draw[->-=.5] (-.5,0) -- (-.1,0);
\draw[->-=.5] (0,.2) -- (.5,.2);
\draw[->-=.5] (0,-.2) -- (.5,-.2);
\draw[->-=.5] (0,.1) 
to[out=east, in=north] (.3,0);
\draw[->-=.5] (0,-.1)
to[out=east, in=south] (.3,0);
\draw[-<-=.5] (.3,0) -- (.5,0);
\draw[fill=white] (-.2,-.3) rectangle (0,.3);
\node at (.4,0) [right] {${\scriptstyle 1}$};
\node at (.4,.2) [right] {${\scriptstyle n-k-2}$};
\node at (.4,-.2) [right] {${\scriptstyle k}$};
}\,\Big\rangle_{\! 3}=0$\quad ($k=0,1,\dots,n-2$).
\end{itemize}
\end{LEM}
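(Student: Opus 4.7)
I would prove both properties simultaneously by induction on $n$, treating (2) as the technical core from which (1) follows. The base case $n=1$ is immediate since the clasp reduces to a single strand: property (1) is the tautology that two strands stacked end-to-end form one strand, and property (2) is vacuous because the index range $k=0,\dots,n-2$ is empty.

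For the inductive step, I first attack property (2). I substitute the recursive definition of the size-$n$ clasp into the diagram with the cap-yoke of type $(k,n-k-2)$ attached, producing two terms with coefficients $1$ and $-[n-1]/[n]$, each involving size-$(n-1)$ clasps. I split into cases according to where the distinguished lone strand of the recursion sits relative to the cap-yoke: inside the top group of $n-k-2$ strands, inside the bottom group of $k$ strands, or in the middle pair. In most subcases the cap-yoke reduces, after a local isotopy, to a cap-yoke attached to the embedded size-$(n-1)$ clasp and vanishes by the induction hypothesis for (2). The remaining subcases I compute directly via the local skein evaluations available from the $A_2$ bracket: digon absorption (the $[2]$ factor), triangle moves, circle evaluation $[3]$, and the recursive definition of the clasp itself. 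The surviving contributions from the first term must cancel against the $\theta$-graph-like piece produced by the second term, and the coefficient $-[n-1]/[n]$ in the recursion is precisely engineered to make this cancellation work.

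Property (1) then follows by expanding the lower of the two stacked size-$n$ clasps via the recursive definition. The first summand leaves a size-$(n-1)$ clasp meeting the upper size-$n$ clasp with an additional free strand; after also expanding the upper size-$n$ clasp and applying the induction hypothesis for (1), this reproduces the identity term in the recursive definition of a single size-$n$ clasp. The second summand presents a cap-yoke configuration to the upper size-$n$ clasp, which vanishes by property (2) just established. The surviving pieces reassemble exactly the recursive definition of a single size-$n$ clasp. The main obstacle I anticipate is the bookkeeping in property (2): tracking how the inserted single strand interacts with each cap-yoke position, managing signs and quantum integer coefficients across the skein reductions, and verifying the cancellation uniformly in $k$ and $n$. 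This is the $A_2$ analogue of the Jones-Wenzl idempotent proof, but is more intricate because of the trivalent vertices and the richer set of local moves in the $A_2$ skein.
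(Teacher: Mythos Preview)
The paper does not actually prove this lemma. It sits in Section~2, which the author explicitly frames as a review of the $A_2$ web space, the $A_2$ bracket, and the $A_2$ clasp; the clasp and its two basic properties are standard (going back to Kuperberg and to Ohtsuki--Yamada) and are simply stated here without argument. So there is no proof in the paper to compare your proposal against.

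Your inductive strategy is the standard one and is correct in outline. One sharpening worth noting: in the recursive definition the distinguished single strand is fixed at the bottom, so your case split for property~(2) is really governed by the value of $k$, not by a choice of where the lone strand sits. For $k\ge 1$ the cap-yoke lies entirely among the top $n-1$ strands and therefore meets the embedded size-$(n-1)$ clasp in \emph{both} the identity summand and the correction summand of the recursion; both vanish immediately by the induction hypothesis and no skein computation is needed. Only the case $k=0$ (where the cap-yoke captures the bottom strand together with the lowest strand of the $(n-1)$-clasp) requires the direct local evaluations you describe, and there the coefficient $-[n-1]/[n]$ is exactly what forces the two surviving contributions to cancel. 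Your derivation of (1) from (2) is also fine: once the correction summand is killed by~(2), expanding the remaining clasp and applying the induction hypothesis for~(1) to the embedded $(n-1)$-clasps reassembles the recursion for a single size-$n$ clasp.
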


We also define the $A_2$ clasp of type $(n,m)$ according to Ohtsuki and Yamada~\cite{OhtsukiYamada97}.
\begin{DEF}[the $A_2$ clasp of type $(n,m)$]\label{doubleA2clasp}
\[
\Bigg\langle\,\tikz[baseline=-.6ex, scale=.8]{
\draw[-<-=.8] (-.6,.4) -- (.6,.4);
\draw[->-=.8] (-.6,-.4) -- (.6,-.4);
\draw[fill=white] (-.1,-.6) rectangle (.1,.6);
\draw (-.1,.0) -- (.1,.0);
\node at (.4,-.6)[right]{$\scriptstyle{m}$};
\node at (-.4,-.6)[left]{$\scriptstyle{m}$};
\node at (.4,.6)[right]{$\scriptstyle{n}$};
\node at (-.4,.6)[left]{$\scriptstyle{n}$};
}\,\Bigg\rangle_{\! 3}
=
\sum_{k=0}^{\min\{m,n\}}
(-1)^k
\frac{{n\brack k}{m\brack k}}{{n+m+1\brack k}}
\Bigg\langle\,\tikz[baseline=-.6ex]{
\draw
(-.4,.4) -- +(-.2,0)
(.4,-.4) -- +(.2,0)
(-.4,-.4) -- +(-.2,0)
(.4,.4) -- +(.2,0);
\draw[-<-=.5] (-.4,.5) -- (.4,.5);
\draw[->-=.5] (-.4,-.5) -- (.4,-.5);
\draw[-<-=.5] (-.4,.3) to[out=east, in=east] (-.4,-.3);
\draw[->-=.5] (.4,.3) to[out=west, in=west] (.4,-.3);
\draw[fill=white] (.4,-.6) rectangle +(.1,.4);
\draw[fill=white] (-.4,-.6) rectangle +(-.1,.4);
\draw[fill=white] (.4,.6) rectangle +(.1,-.4);
\draw[fill=white] (-.4,.6) rectangle +(-.1,-.4);
\node at (.4,-.6)[right]{$\scriptstyle{m}$};
\node at (-.4,-.6)[left]{$\scriptstyle{m}$};
\node at (.4,.6)[right]{$\scriptstyle{n}$};
\node at (-.4,.6)[left]{$\scriptstyle{n}$};
\node at (0,.5)[above]{$\scriptstyle{n-k}$};
\node at (0,-.5)[below]{$\scriptstyle{m-k}$};
\node at (-.2,0)[left]{$\scriptstyle{k}$};
\node at (.2,0)[right]{$\scriptstyle{k}$};
}\,\Bigg\rangle_{\! 3}
\]
\end{DEF}

\begin{LEM}[Property of $A_2$ clasps of type $(m,n)$]\label{doubleA2claspprop}
\[
\Big\langle\,\tikz[baseline=-.6ex]{
\draw[-<-=.5] (-.5,.2) -- (-.1,.2);
\draw[->-=.5] (-.5,-.2) -- (-.1,-.2);
\draw[-<-=.5] (0,.2) -- (.5,.2);
\draw[->-=.5] (0,-.2) -- (.5,-.2);
\draw[-<=.5] (0,.1) 
to[out=east, in=north] (.3,0);
\draw (0,-.1)
to[out=east, in=south] (.3,0);
\draw[fill=white] (-.2,-.3) rectangle (0,.3);
\draw (-.2,.0) -- (.0,.0);
\node at (.4,0) [right] {${\scriptstyle 1}$};
\node at (.4,.2) [right] {${\scriptstyle n-1}$};
\node at (.4,-.2) [right] {${\scriptstyle m-1}$};
}\,\Big\rangle_{\! 3}=0 .
\]
\end{LEM}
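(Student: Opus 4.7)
The plan is to reduce Lemma~\ref{doubleA2claspprop} to the second bullet of Lemma~2.3 by expanding the $(n,m)$-clasp through Definition~\ref{doubleA2clasp}. First, I would substitute the right-hand side of Definition~\ref{doubleA2clasp} into the picture in Lemma~\ref{doubleA2claspprop}, rewriting the $(n,m)$-clasp as a $\mathbb{Q}(q^{1/6})$-linear sum indexed by $k=0,1,\dots,\min\{m,n\}$. Each term in this sum is a web built from two parallel single $A_2$ clasps (one of type $n$ and one of type $m$) connected by $k$ cup-cap bridges on each side of the picture, with coefficient $(-1)^k{n\brack k}{m\brack k}/{n+m+1\brack k}$.

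Next, I would analyze how the hook on the right interacts with each summand. The hook removes one strand from the $n$-cable on top and one from the $m$-cable on bottom and feeds them into a single trivalent vertex. Because the right-hand ends of the expanded picture consist only of the two single clasps (with $k$ cup-cap bridges passing between them on the right), the trivalent vertex can be slid, via boundary-fixing isotopy of the underlying web, against one of the two right-hand single clasps: the strand entering the vertex from the $n$-cable meets the clasp of type $n$ from the right, producing precisely the "turnback" configuration of the second bullet of Lemma~2.3. Hence each summand vanishes term-by-term, and the total is zero.

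The main obstacle is the bookkeeping in the second step for terms with large $k$: one must confirm that after the slide the configuration meeting the right-hand single $n$-clasp is genuinely of the turnback form appearing in Lemma~2.3, with a single $1$-labeled edge running between the top $n-1$ output and the bottom $k$ output of that clasp, rather than some more intricate web. This can be verified by a direct diagrammatic check, isotoping the incoming $1$-strand along the outer boundary of the right-hand $n$-clasp until it lies adjacent to the $k$-strand bridge. An alternative and more conceptual route — useful as a sanity check — is to use the characterization of the $(n,m)$-clasp as the unique idempotent projector onto the irreducible $U_q(\mathfrak{sl}_3)$-module $V(n,m)$ inside $V^{\otimes n}\otimes (V^{*})^{\otimes m}$ established in \cite{OhtsukiYamada97}: the hook factors through a tensor product whose highest-weight component $V(n,m)$ occurs with multiplicity zero, so the composition with the clasp must vanish.
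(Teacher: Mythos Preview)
The paper does not prove this lemma; it is stated as a known property of the Ohtsuki--Yamada clasp, with~\cite{OhtsukiYamada97} cited for the construction. Your representation-theoretic alternative at the end is correct and is essentially the standard justification.

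Your main diagrammatic argument, however, has a genuine gap: the summands in the expansion of Definition~\ref{doubleA2clasp} do \emph{not} vanish term by term. Already at $k=0$ there are no bridges at all, and the hook joins one strand from the right face of the single $n$-clasp to one strand from the right face of the single $m$-clasp. The second bullet of Lemma~2.3 requires \emph{two adjacent outputs of the same single clasp} to meet at a trivalent vertex; here the two ends of the hook sit on different clasps, so neither one sees a turnback and the $k=0$ term is nonzero on its own. For $k\ge 1$ the slide you describe would have to carry one end of the hook from the right face of the $m$-clasp across to the $n$-clasp, but the $m$-clasp box obstructs this --- it is not an isotopy of webs. (Note also that the hook in Lemma~\ref{doubleA2claspprop} is a width-$1$ \emph{cap}, not a trivalent vertex: the top and bottom cables carry opposite orientations, so a plain arc between them is already a legal $A_2$ web, which makes the match with Lemma~2.3 awkward from the start.) The vanishing of the full expression is a cancellation across the index $k$, engineered precisely by the coefficients $(-1)^k{n\brack k}{m\brack k}\big/{n+m+1\brack k}$, not a termwise zero; a purely diagrammatic proof along your lines would have to carry out that cancellation explicitly.
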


We review some formulas for clasped $A_2$ web spaces in~\cite{Yuasa16}. 
We define a $q$-Pochhammer symbol as 
\[
 (q;q)_k=\prod_{l=1}^{k}(1-q^l).
\]
We abbreviate it as $(q)_k$.
A $q$-binomial coefficient is defined by
\[
 {n\choose k}_q=\frac{(q;q)_n}{(q;q)_k(q;q)_{n-k}}
\]
for $k\leq n$.
If $k>n$,
we define it by $0$.
We also define a $q$-multinomial coefficient as
\[
 {n\choose n_1,n_2,\dots,n_m}_q=\frac{(q)_n}{(q)_{n_1}(q)_{n_2}\cdots(q)_{n_m}},
\]
where $n_1, n_2,\dots, n_m$ are non-negative integers such that $n_1+n_2+\dots+n_m=n$.

\begin{THM}[The $m$ full twists formula~\cite{Yuasa16}]\label{A2mfull}
\begin{align*}
\Bigg\langle\,\tikz[baseline=-.6ex]{
\begin{scope}[xshift=-1cm]
\draw 
(-.5,.4) -- +(-.2,0)
(-.5,-.4) -- +(-.2,0);
\draw[->-=1, white, double=black, double distance=0.4pt, ultra thick] 
(-.5,-.4) to[out=east, in=west] (.0,.4);
\draw[-<-=1, white, double=black, double distance=0.4pt, ultra thick] 
(-.5,.4) to[out=east, in=west] (.0,-.4);
\draw[white, double=black, double distance=0.4pt, ultra thick] 
(0,-.4) to[out=east, in=west] (.5,.4);
\draw[white, double=black, double distance=0.4pt, ultra thick] 
(0,.4) to[out=east, in=west] (.5,-.4);
\draw[fill=white] (-.5,-.6) rectangle +(-.1,.4);
\draw[fill=white] (-.5,.6) rectangle +(-.1,-.4);
\node at (-.5,-.6)[left]{$\scriptstyle{n}$};
\node at (-.5,.6)[left]{$\scriptstyle{n}$};
\end{scope}
\node at (.0,.0){$\cdots$};
\node at (.0,-.4)[below]{$\scriptstyle{m\text{ full twists}}$};
\begin{scope}[xshift=1cm]
\draw
(.5,-.4) -- +(.2,0)
(.5,.4) -- +(.2,0);
\draw[->-=1, white, double=black, double distance=0.4pt, ultra thick] 
(-.5,-.4) to[out=east, in=west] (.0,.4);
\draw[-<-=1, white, double=black, double distance=0.4pt, ultra thick] 
(-.5,.4) to[out=east, in=west] (.0,-.4);
\draw[white, double=black, double distance=0.4pt, ultra thick] 
(0,-.4) to[out=east, in=west] (.5,.4);
\draw[white, double=black, double distance=0.4pt, ultra thick] 
(0,.4) to[out=east, in=west] (.5,-.4);
\draw[fill=white] (.5,-.6) rectangle +(.1,.4);
\draw[fill=white] (.5,.6) rectangle +(.1,-.4);
\node at (.5,-.6)[right]{$\scriptstyle{n}$};
\node at (.5,.6)[right]{$\scriptstyle{n}$};
\end{scope}
}\,\Bigg\rangle_{\! 3}
&=q^{-\frac{2m}{3}(n^2+3n)}
\sum_{0\leq k_m\leq \cdots\leq k_1\leq n}
q^{n-k_m}
q^{\sum_{i=1}^{m}(k_i^2+2k_i)}\\
&\qquad\times\frac{(q)_n}{(q)_{k_m}}
{n \choose k_1',k_2',\dots,k_m',k_m}_{q}
\Bigg\langle\,\tikz[baseline=-.6ex]{
\draw
(-.4,.4) -- +(-.2,0)
(.4,-.4) -- +(.2,0)
(-.4,-.4) -- +(-.2,0)
(.4,.4) -- +(.2,0);
\draw[-<-=.5] (-.4,.5) -- (.4,.5);
\draw[->-=.5] (-.4,-.5) -- (.4,-.5);
\draw[-<-=.5] (-.4,.3) to[out=east, in=east] (-.4,-.3);
\draw[->-=.5] (.4,.3) to[out=west, in=west] (.4,-.3);
\draw[fill=white] (.4,-.6) rectangle +(.1,.4);
\draw[fill=white] (-.4,-.6) rectangle +(-.1,.4);
\draw[fill=white] (.4,.6) rectangle +(.1,-.4);
\draw[fill=white] (-.4,.6) rectangle +(-.1,-.4);
\node at (.4,-.6)[right]{$\scriptstyle{n}$};
\node at (-.4,-.6)[left]{$\scriptstyle{n}$};
\node at (.4,.6)[right]{$\scriptstyle{n}$};
\node at (-.4,.6)[left]{$\scriptstyle{n}$};
\node at (0,.5)[above]{$\scriptstyle{k_m}$};
\node at (0,-.5)[below]{$\scriptstyle{k_m}$};
\node at (-.2,0)[left]{$\scriptstyle{n-k_m}$};
\node at (.2,0)[right]{$\scriptstyle{n-k_m}$};
}\,\Bigg\rangle_{\! 3},
\end{align*}
where $k_i, k_i'$ are integers such that $k_0=n$, $k_{i+1}'=k_i-k_{i+1}$ for $i=0,1,\dots,m-1$.
\end{THM}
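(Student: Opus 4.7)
The plan is to prove Theorem~\ref{A2mfull} by induction on the number of full twists $m$. The heart of the argument is the $m=1$ case, a single full twist formula on two $n$-cabled strands. I would establish this by expanding each crossing of one full twist through the defining $A_2$ bracket relations. Every crossing expansion replaces a crossing by a linear combination of two planar web diagrams (a pair of oppositely oriented caps or an H-shape), so iterating produces a sum over many diagrams. Most of these collapse after invoking the clasp-absorption property and Lemma~\ref{doubleA2claspprop}: any diagram whose middle section presents a forbidden cup-cap configuration is killed by the flanking $A_2$ clasps. What survives is indexed by a single integer $k_1\in\{0,1,\dots,n\}$ recording how many strands of the two cables remain uncrossed, and the surviving configuration is exactly the double clasp of type $(n-k_1,k_1)$. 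Tracking the $q$-powers contributed by each crossing and by the Ohtsuki--Yamada expansion of the double clasp (Definition~\ref{doubleA2clasp}) then yields the $m=1$ formula.

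For the inductive step I split the $m$-fold twist as the composition of a single full twist on top of an $(m-1)$-fold twist. By induction the lower $(m-1)$ twists produce a sum over $0\le k_{m-1}\le\dots\le k_1\le n$ of double clasps of type $(n-k_{m-1},k_{m-1})$ with the stated coefficients. The remaining full twist then acts on each such double clasp. Hence the essential new input is a generalization of the $m=1$ formula: how one full twist acts on a double clasp of type $(n-k,k)$, expanding it as a sum over $k'\le k$ of double clasps of type $(n-k',k')$. This generalization is derived by the same crossing-expansion plus clasp-killing strategy as the $m=1$ case, but with the double clasp now providing the projector in the middle; the monotonicity $k'\le k$ reflects the fact that further crossings can only further reduce the separated cable of size $n-k$, not restore it.

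Once the single-twist-on-double-clasp formula is in hand, the inductive step reduces to a combinatorial check that the local coefficients multiply into the claimed ones. The key $q$-binomial identity is the telescoping
\[
{n\choose k_1',\dots,k_{m-1}',k_{m-1}}_q\cdot{k_{m-1}\choose k_m}_q={n\choose k_1',\dots,k_m',k_m}_q,
\]
combined with the elementary ratio $(q)_{k_{m-1}}/(q)_{k_m}$ that comes from rewriting $(q)_n/(q)_{k_{m-1}}$ as $(q)_n/(q)_{k_m}$. The $q$-power check is then the routine verification that the contribution $q^{k_{m-1}-k_m}q^{k_m^2+2k_m}q^{-\frac{2}{3}(n^2+3n)}$ from the final twist combines with the accumulated factor from the $(m-1)$-fold twist to produce the stated global power $q^{n-k_m}q^{\sum_{i=1}^m(k_i^2+2k_i)}q^{-\frac{2m}{3}(n^2+3n)}$.

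The main obstacle is the derivation of the single-twist-on-double-clasp formula. Organizing the exponential blowup of crossing expansions into a clean sum indexed by the single parameter $k'$ requires careful use of Definition~\ref{doubleA2clasp} together with repeated applications of Lemma~\ref{doubleA2claspprop}, and delicate bookkeeping of the precise power of $q$ picked up at each step. Once that lemma is proved, the rest of the induction is a matter of telescoping $q$-multinomials and collecting powers, neither of which presents a conceptual difficulty beyond standard $q$-series manipulation.
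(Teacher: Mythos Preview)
This theorem is not proved in the present paper: it is quoted verbatim from the author's earlier work~\cite{Yuasa16} (note the attribution in the theorem header), and no argument for it appears here. So there is no ``paper's own proof'' to compare your proposal against.

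That said, your inductive outline is the natural strategy and is, in broad strokes, how the result is established in~\cite{Yuasa16}: one first proves a single full twist formula on two clasped $n$-cables, then iterates. Your identification of the key technical step---the action of one full twist on a double clasp of type $(n-k,k)$, producing a sum over smaller double clasps---is correct, and the $q$-multinomial telescoping you describe is exactly what assembles the iterated coefficients. The main caution is that the honest derivation of the single-twist formula in the $A_2$ setting is substantially more intricate than your sketch suggests: the crossing expansion produces trivalent ``H'' pieces rather than just cup-caps, so the surviving diagrams are not cut down by clasp-killing alone, and one needs additional reduction identities (of the sort appearing as \cite[Lemma~3.16]{Yuasa16}, invoked here in the proof of Lemma~\ref{twistcoeff}) to collapse the web combinatorics to a single index $k_1$. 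Your proposal acknowledges this as ``the main obstacle'' but underestimates it; filling it in is the real content of the cited paper.
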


\begin{THM}[The $A_2$ bracket bubble skein expansion formula~\cite{Yuasa16}]\label{A2bubble}
\[
\Bigg\langle\,\tikz[baseline=-.6ex, scale=0.8]{
\draw (-.4,.5) -- +(-.2,0);
\draw (.4,-.5) -- +(.2,0);
\draw (-.4,-.5) -- +(-.2,0);
\draw (.4,.5) -- +(.2,0);
\draw[-<-=.5] (-.4,.5) -- (0,.5);
\draw[-<-=.5] (0,.5) -- (.4,.5);
\draw[->-=.5] (-.4,-.5) -- (0,-.5);
\draw[->-=.5] (0,-.5) -- (.4,-.5);
\draw[-<-=.5] (.05,.3) to[out=east, in=east] (.05,.-.3);
\draw[->-=.5] (-.05,.3) to[out=west, in=west] (-.05,-.3);
\draw[fill=white] (-.4,.3) rectangle +(-.1,.3);
\draw[fill=white] (.4,.3) rectangle +(.1,.3);
\draw[fill=white] (-.4,-.3) rectangle +(-.1,-.3);
\draw[fill=white] (.4,-.3) rectangle +(.1,-.3);
\draw[fill=white] (-.05,.2) rectangle +(.1,.4);
\draw[fill=white] (-.05,-.2) rectangle +(.1,-.4);
\node at (.4,-.5)[below right]{$\scriptstyle{m-l}$};
\node at (-.4,-.5)[below left]{$\scriptstyle{m-k}$};
\node at (.4,.5)[above right]{$\scriptstyle{n-l}$};
\node at (-.4,.5)[above left]{$\scriptstyle{n-k}$};
\node at (-.2,0)[left]{$\scriptstyle{k}$};
\node at (.2,0)[right]{$\scriptstyle{l}$};
\node at (0,-.6)[below]{$\scriptstyle{m}$};
\node at (0,.6)[above]{$\scriptstyle{n}$};
}\,\Bigg\rangle_{\! 3}
=
\sum_{t=\max\{k, l\}}^{\min\{k+l, n, m\}}
\frac{{n\brack t}{m\brack t}{t\brack k}{t\brack l}{n+m-t+2\brack n+m-k-l+2}}{{n\brack k}{m\brack k}{n\brack l}{m\brack l}}
\Bigg\langle\,\tikz[baseline=-.6ex, scale=0.8]{
\draw (-.4,.4) -- +(-.2,0);
\draw (.4,-.4) -- +(.2,0);
\draw (-.4,-.4) -- +(-.2,0);
\draw (.4,.4) -- +(.2,0);
\draw[-<-=.5] (-.4,.5) -- (.4,.5);
\draw[->-=.5] (-.4,-.5)  -- (.4,-.5);
\draw[-<-=.5] (-.4,.3) to[out=east, in=east] (-.4,.-.3);
\draw[->-=.5] (.4,.3) to[out=west, in=west] (.4,-.3);
\draw[fill=white] (-.4,.2) rectangle +(-.1,.4);
\draw[fill=white] (.4,.2) rectangle +(.1,.4);
\draw[fill=white] (-.4,-.2) rectangle +(-.1,-.4);
\draw[fill=white] (.4,-.2) rectangle +(.1,-.4);
\node at (.4,-.6)[right]{$\scriptstyle{m-l}$};
\node at (-.4,-.6)[left]{$\scriptstyle{m-k}$};
\node at (.4,.6)[right]{$\scriptstyle{n-l}$};
\node at (-.4,.6)[left]{$\scriptstyle{n-k}$};
\node at (-.2,0)[left]{$\scriptstyle{t-k}$};
\node at (.2,0)[right]{$\scriptstyle{t-l}$};
\node at (0,-.5)[below]{$\scriptstyle{m-t}$};
\node at (0,.5)[above]{$\scriptstyle{n-t}$};
}\,\Bigg\rangle_{\! 3},
\]
where 
${a \brack b}=\frac{\left[a\right]!}{\left[b\right]!\left[a-b\right]!}$
for $a\geq b$ and  $\left[a\right]!=\prod_{i=1}^a\left[i\right]$. 
\end{THM}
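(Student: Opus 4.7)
The plan is to identify both sides of the claimed identity with the tail $T_{T(2,2m)}(q)$ of the $\mathfrak{sl}_3$ colored Jones polynomial of the $(2,2m)$-torus link, colored by the irreducible representation of highest weight $(n,0)$, and then obtain the stated $q$-series identity by equating the two resulting expressions.

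For the right-hand side I would start from the $m$ full twists formula (Theorem~\ref{A2mfull}). The $(2,2m)$-torus link is the braid closure of the two-strand $m$-fold full twist, so closing the top and bottom of that formula expresses the colored Jones polynomial as a sum indexed by $0\le k_m\le\dots\le k_1\le n$. The closure of the type-$(n-k_m,k_m)$ $A_2$ clasp appearing on the right of Theorem~\ref{A2mfull} evaluates to a theta-type web whose explicit value is computed in Section~3. After dividing by the quantum dimension of the $(n,0)$-representation to obtain a genuine tail, and passing to the limit $n\to\infty$, the $q$-multinomial $\binom{n}{k_1',\dots,k_m',k_m}_q$ converges to $\big((q)_{k_m}\prod_j(q)_{k_j-k_{j+1}}\big)^{-1}$ and the clasp-closure evaluation contributes the remaining factor $1/(q)_{k_m}$. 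The $q^{n-k_m}$ in Theorem~\ref{A2mfull}, combined with the prefactor $q^{-2mn(n+3)/3}$ and the framing correction of the braid closure, collapses to $q^{-2k_m}$; the $(q)_\infty$ appears from the quantum-dimension normalization. This reproduces the right-hand side.

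For the left-hand side I would use the colored trivalent graph presentation of Section~3. Each of the $2m$ crossings between the two $(n,0)$-colored cables is expanded via the Kuperberg skein relation, and then the $A_2$ bubble skein expansion formula (Theorem~\ref{A2bubble}) is iterated to collapse each full twist into a single bubble indexed by an internal color $i\ge 0$. After $m$ twists, the link becomes a sum over a single index $i$, with a factor $q^{m(i^2+2i)}$ coming from the accumulated crossing weights and a ratio of quantum $6j$-symbols and theta-graphs evaluated via Section~3. Normalizing by the $(n,0)$-quantum dimension and sending $n\to\infty$, this $6j$/theta ratio simplifies to the cubic factor $(1-q^{i+1})^3(1+q^{i+1})/(1-q)$, while the residual framing anomaly contributes $q^{-2i}$. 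Summing over $i$ yields the left-hand side.

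The main obstacle will be verifying that the two limits stabilize term by term. One has to check that summands with $k_m$ (respectively with bubble label $i$) exceeding $n$ contribute only at orders of $q$ diverging to infinity, so that the tail truncates to the stated sums; one must also carefully track the framing anomaly, the quantum-dimension normalization, and the residual $q^{\,\text{linear in }n}$ prefactors to ensure they cancel to leave exactly the exponents $q^{-2k_m}q^{\sum_j(k_j^2+2k_j)}$ and $q^{-2i}q^{m(i^2+2i)}$ on the two sides. A further technical input is the explicit evaluation of the theta graphs and $6j$-symbols, done in Section~3, which produces the distinctive cubic factor $(1-q^{i+1})^3(1+q^{i+1})$ on the left. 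Once both tail formulas are established, the theorem follows by equating them.
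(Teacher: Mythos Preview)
Your proposal does not address the statement in question. Theorem~\ref{A2bubble} is the $A_2$ bubble skein expansion formula, a purely diagrammatic identity in the clasped $A_2$ web space expressing a bubble with internal clasps of sizes $n,m$ and turn-backs $k,l$ as a linear combination of basis webs indexed by $t$. The paper does not prove this statement at all: it is quoted verbatim from \cite{Yuasa16} as a background tool, and there is no proof in the present paper to compare against.

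What you have written is instead a sketch of the proof of Theorem~\ref{qseries}, the $q$-series identity obtained by equating two expressions for the $\mathfrak{sl}_3$ tail of $T(2,2m)$. That argument (compute the tail two ways, once via Theorem~\ref{A2mfull} and once via the trivalent-graph calculus of Section~3, then take $n\to\infty$) is indeed essentially how the paper establishes Theorem~\ref{qseries}, but it has nothing to do with Theorem~\ref{A2bubble}. In particular, your plan \emph{uses} Theorem~\ref{A2bubble} as an input rather than proving it: you invoke it to collapse bubbles created by the crossing expansion. A proof of Theorem~\ref{A2bubble} itself would have to be an inductive argument in the $A_2$ skein module, for instance inducting on $k+l$ using the recursive definition of the $A_2$ clasp and the local $A_2$ relations, as carried out in \cite{Yuasa16}. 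None of that appears in your proposal.
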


\section{Trivalent graphs for clasped $A_2$ web spaces}
For the Kauffman bracket skein module, 
a method for representing its elements by colored trivalent graphs is well known. 
Formulas related to colored trivalent graphs play a very important role in defining the quantum $SU(2)$ invariants for $3$-manifolds and the quantum representations for mapping class groups of surfaces, etc.
(See, for example, Kauffman and Lins~\cite{KauffmanLins94}, Turaev and Viro~\cite{TuraevViro92} and Roberts~\cite{Roberts94}.)
In this section, 
we represent a certain clasped $A_2$ web space by using colored trivalent graphs and give some formulas.

\begin{DEF}
Let $n$ be a non-negative integer. 
For $0\leq i\leq n$, 
we define a colored trivalent graph
$
\tikz[baseline=-.6ex, scale=0.5]{
\draw[triple={[line width=1.4pt, white] in [line width=2.2pt, black] in [line width=5.4pt, white]}]
(0,0) -- (1,0);
\draw[-<-=.5] (-1,1) to[out=east, in=north west] (0,0);
\draw[->-=.5] (-1,-1) to[out=east, in=south west] (0,0);
\node at (-1,1) [above]{$\scriptstyle{n}$};
\node at (-1,-1) [above]{$\scriptstyle{n}$};
\node at (.5,0) [above]{$\scriptstyle{i}$};
}
\text{\ by\ }
\tikz[baseline=-.6ex, scale=0.5]{
\draw (-1,.9) -- +(-.5,0);
\draw (-1,-.9) -- +(-.5,0);
\draw (1,.3) -- +(.5,0);
\draw (1,-.3) -- +(.5,0);
\draw[-<-=.5] (-1,1) to[out=east, in=west] (1,.3);
\draw[->-=.5] (-1,-1) to[out=east, in=west] (1,-.3);
\draw[-<-=.5] (-1,.8) to[out=east, in=east] (-1,.-.8);
\draw[fill=white] (-1.2,.6) rectangle (-1,1.2);
\draw[fill=white] (-1.2,-.6) rectangle (-1,-1.2);
\draw[fill=white] (1,-.6) rectangle (1.2,.6);
\draw (1,.0) -- (1.2,.0);
\node at (-1,.6)[left]{$\scriptstyle{n}$};
\node at (-1,-.6)[left]{$\scriptstyle{n}$};
\node at (-1.2,0){$\scriptstyle{n-i}$};
\node at (0,1){$\scriptstyle{i}$};
\node at (0,-1){$\scriptstyle{i}$};
}
$\ .
\end{DEF}
We use the following notations:
\begin{itemize}
\item $\Delta(m,n)=
\bigg\langle\,
\tikz[baseline=-.6ex, scale=0.5]{
\draw[-<-=.5] (0,0) circle (.6);
\draw[->-=.5] (0,0) circle (1);
\draw[fill=white] (.4,-.1) rectangle (1.2,.1);
\draw (.8,-.1) -- (.8,.1);
\node at (-.6,0) [right]{$\scriptstyle{n}$};
\node at (-1,0) [left]{$\scriptstyle{m}$};
}\,\bigg\rangle_{\! 3},
$
\item $\theta(n,n,(i,i))=
\bigg\langle\,
\tikz[baseline=-.6ex, scale=0.5]{
\draw[triple={[line width=1.4pt, white] in [line width=2.2pt, black] in [line width=5.4pt, white]}]
(0,0) -- (1,0);
\draw[->-=.5] 
(0,0) to[out=north, in=west] 
(.5,.8) to[out=east, in=north] (1,0);
\draw[-<-=.5] 
(0,0) to[out=south, in=west] 
(.5,-.8) to[out=east, in=south] (1,0);
\node at (0,.8) {$\scriptstyle{n}$};
\node at (0,-.8) {$\scriptstyle{n}$};
\node at (.5,.0) [above]{$\scriptstyle{i}$};
}\,\bigg\rangle_{\! 3},
$
\item $\operatorname{Tet}\!
\begin{bmatrix}
n&n&(j,j)\\
n&n&(i,i)
\end{bmatrix}=
\bigg\langle\,
\tikz[baseline=-.6ex, scale=0.5]{
\draw[triple={[line width=1.4pt, white] in [line width=2.2pt, black] in [line width=5.4pt, white]}]
(0,0) -- (1.2,0);
\draw[triple={[line width=1.4pt, white] in [line width=2.2pt, black] in [line width=5.4pt, white]}]
(.6,1) to[out=north, in=north] 
(2,1) -- (2,-1)
to[out=south, in=south] (.6,-1);
\draw[->-=.5] (0,0) -- (.6,1);
\draw[->-=.5] (.6,1) -- (1.2,0);
\draw[-<-=.5] (0,0) -- (.6,-1);
\draw[-<-=.5] (.6,-1) -- (1.2,0);
\node at (0,.8) {$\scriptstyle{n}$};
\node at (0,-.8) {$\scriptstyle{n}$};
\node at (1,.8) {$\scriptstyle{n}$};
\node at (1,-.8) {$\scriptstyle{n}$};
\node at (.6,.0) [above]{$\scriptstyle{i}$};
\node at (2,.0) [left]{$\scriptstyle{j}$};
}\,\bigg\rangle_{\! 3},
$
\item 
$\displaystyle
\begin{Bmatrix}
n&n&(j,j)\\
n&n&(i,i)
\end{Bmatrix}
=
\frac{\operatorname{Tet}\!
\begin{bmatrix}
n&n&(j,j)\\
n&n&(i,i)
\end{bmatrix}\Delta(j,j)}{\theta(n,n,(j,j))^2},
$
\end{itemize}
where $m,n$ are any non-negative integers and $0\leq i,j\leq n$.

\begin{LEM}\ 
\begin{enumerate}
\item $\Delta(i,j)=\frac{\left[i+1\right]\left[j+1\right]\left[i+j+2\right]}{\left[2\right]}$,
\item $\theta(n,n,(i,i))=\sum_{k=0}^i(-1)^k\frac{{i\brack k}^2}{{2i+1\brack k}}\frac{\Delta(n,0)^2}{\Delta(n-i+k,0)}=\frac{{n+i+2\brack 2i+2}}{{n\brack i}^2}\Delta(i,i)$,
\item $\operatorname{Tet}\!
\begin{bmatrix}
n&n&(j,j)\\
n&n&(i,i)
\end{bmatrix}
=\sum_{k=0}^i(-1)^k\frac{{i\brack k}^2{n-j\brack i-k}{n+j+2\brack i-k}}{{2i+1\brack k}{n\brack i-k}^2}\theta(n,n,(j,j))$.
\end{enumerate}
\end{LEM}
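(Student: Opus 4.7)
The plan is to derive all three formulas via a uniform strategy: expand the relevant $A_2$ clasp using Definition~2.4 (or its single-color analogue) and reduce the resulting closed webs by the bubble skein expansion formula (Theorem~2.6) together with the clasp absorption properties of Lemmas~2.3 and~2.5. Part~(1) is the simplest: $\Delta(i,j)$ is the quantum dimension of the irreducible $U_q(\mathfrak{sl}_3)$-module of highest weight $(i,j)$, and I would verify the product formula $[i+1][j+1][i+j+2]/[2]$ by induction on $i+j$, starting from $\Delta(0,0)=1$ and $\Delta(1,0)=\Delta(0,1)=[3]$ and peeling off one strand at a time via the bubble expansion.

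For part~(2), I would expand the central $(i,i)$-clasp in $\theta(n,n,(i,i))$ by Definition~2.4. The $k$-th term replaces the clasp by $i-k$ parallel strands together with $k$ strands forming a small cap. Since the endpoints sit inside the outer $(n,n)$-clasps, the parallel strands are absorbed via Lemma~2.3 and the diagram collapses to a theta web in which one loop is colored by $n-i+k$; its value is $\Delta(n,0)^2/\Delta(n-i+k,0)$. Together with the clasp-expansion coefficient $(-1)^k{i\brack k}^2/{2i+1\brack k}$ this yields the first equality. For the closed-form second equality, I would evaluate the same $\theta(n,n,(i,i))$ by a different fusion order: applying Theorem~2.6 to the pair of parallel $n$-strands collapses the web to $\Delta(i,i)$ times a single $q$-binomial ratio, producing $\frac{{n+i+2\brack 2i+2}}{{n\brack i}^2}\Delta(i,i)$ directly.

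For part~(3) the same strategy applies. Expanding the $(i,i)$-clasp in the $\operatorname{Tet}$ diagram gives a sum over $k$ of diagrams in which the $(j,j)$-arc is now threaded by $i-k$ parallel strands with a $k$-strand loop attached; the $(j,j)$-clasp is untouched and contributes $\theta(n,n,(j,j))$ in every term, while the remaining strand configuration forms a bubble whose evaluation by Theorem~2.6 produces the $q$-binomial ratio $\frac{{n-j\brack i-k}{n+j+2\brack i-k}}{{n\brack i-k}^2}$. Multiplying by the clasp-expansion coefficient yields the claimed identity.

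The main obstacle will be the careful diagrammatic bookkeeping required in parts~(2) and~(3): one must correctly track how strands reroute after each clasp expansion, verify that clasp absorption and bubble reduction collapse each term to exactly the claimed $\Delta$ or $\theta$ factor, and match the coefficients generated by Theorem~2.6 against those in Definition~2.4. Obtaining the closed-form expression in~(2) purely from the alternating sum would require a $q$-hypergeometric (Chu--Vandermonde type) summation, so the two-fusion strategy is the cleaner route.
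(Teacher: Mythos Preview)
Your proposal follows essentially the same route as the paper: for (2) and (3) you expand the central $(i,i)$-clasp via Definition~2.4 and then simplify with the bubble formula Theorem~2.6, exactly as the paper does; for (1) the paper offers no argument (it is the standard quantum dimension), so your inductive sketch is fine.

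One point of imprecision worth flagging in part~(3): after expanding the $(i,i)$-clasp, the diagram does \emph{not} immediately factor as $\theta(n,n,(j,j))$ times a single residual bubble. As the paper notes, each term of the expansion contains \emph{two} bubble skein elements, and it is the application of Theorem~2.6 to one of them (with all but one summand killed by Lemma~2.5 applied to the $(j,j)$-clasp) that produces both the factor $\theta(n,n,(j,j))$ and the $q$-binomial ratio $\frac{{n-j\brack i-k}{n+j+2\brack i-k}}{{n\brack i-k}^{2}}$. Your final coefficients are correct, but the intermediate bookkeeping is a step more involved than your description suggests. Similarly, in~(2) the intermediate web is not literally ``a theta web with one loop colored $n-i+k$'': it is a pair of clasped $n$-loops joined by $(n{-}i{+}k)$-bridges, whose partial traces give the value $\Delta(n,0)^{2}/\Delta(n{-}i{+}k,0)$ you state.
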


\begin{proof}
We only show $(2)$ and $(3)$.
The first equation of $(2)$ is obtained by expanding the double-lined edge using Definition~\ref{doubleA2clasp}.
$\theta(n,n,(i,i))$ is also represented by a colored graph
$\bigg\langle\,
\tikz[baseline=-.6ex, scale=0.5]{
\draw[triple={[line width=1.4pt, white] in [line width=2.2pt, black] in [line width=5.4pt, white]}]
(0,0) to[out=west, in=west] (0,.8) -- (1,.8) to[out=east, in=east] (1,0);
\draw[->-=.5] 
(0,0) to[out=north, in=west] 
(.5,.5) to[out=east, in=north] (1,0);
\draw[-<-=.5] 
(0,0) to[out=south, in=west] 
(.5,-.5) to[out=east, in=south] (1,0);
\node at (.5,.5) [below]{$\scriptstyle{n}$};
\node at (.5,-.5) [below]{$\scriptstyle{n}$};
\node at (.5,.8) [above]{$\scriptstyle{i}$};
}\,\bigg\rangle_{\! 3}$. 
This clasped $A_2$ web has a bubble skein element colored by $n-i$ and we use Theorem~\ref{A2bubble}.
Thus, 
we obtain the second equation of $(2)$.
The proof of $(3)$ is obtained by using Definition~\ref{doubleA2clasp} and Theorem~\ref{A2bubble}. 
We expand a double-lined edge colored by $s$.
Then,
the clasped $A_2$ web appearing in each term has two bubble skein elements. 
Finally, $\theta(n,n,(j,j))$ is obtained by expanding one of these bubble skein elements.
\end{proof}

\begin{LEM}\label{bigongraph}
\[
\bigg\langle\,
\tikz[baseline=-.6ex, scale=0.5]{
\draw[triple={[line width=1.4pt, white] in [line width=2.2pt, black] in [line width=5.4pt, white]}]
(-1,0)--(-.2,0);
\draw[triple={[line width=1.4pt, white] in [line width=2.2pt, black] in [line width=5.4pt, white]}]
(1.2,0)--(2,0);
\draw[-<-=.5] 
(-.2,0) to[out=north, in=west] 
(.6,.5) to[out=east, in=north] (1.2,0);
\draw[->-=.5] 
(-.2,0) to[out=south, in=west] 
(.6,-.5) to[out=east, in=south] (1.2,0);
\node at (.6,.5) [above]{$\scriptstyle{n}$};
\node at (.6,-.5) [below]{$\scriptstyle{n}$};
\node at (-1,0) [above]{$\scriptstyle{i}$};
\node at (2,0) [above]{$\scriptstyle{j}$};
}\,\bigg\rangle_{\! 3}
=\delta_{ij}
\frac{\theta(n,n,(i,i))}{\Delta(i,i)}
\bigg\langle
\,\tikz[baseline=-.6ex, scale=0.5]{
\draw[triple={[line width=1.4pt, white] in [line width=2.2pt, black] in [line width=5.4pt, white]}]
(-1,0)--(1,0);
\node at (0,0) [above]{$\scriptstyle{i}$};
}\,\bigg\rangle_{\! 3},
\]
where 
\,\tikz[baseline=-.6ex, scale=0.5]{
\draw[triple={[line width=1.4pt, white] in [line width=2.2pt, black] in [line width=5.4pt, white]}]
(-1,0)--(1,0);
\node at (0,0) [above]{$\scriptstyle{i}$};
}\, 
denotes 
\,\tikz[baseline=-.6ex, scale=.5]{
\draw[-<-=.8] (-.6,.4) -- (.6,.4);
\draw[->-=.8] (-.6,-.4) -- (.6,-.4);
\draw[fill=white] (-.1,-.6) rectangle (.1,.6);
\draw (-.1,.0) -- (.1,.0);
\node at (.4,-.6)[right]{$\scriptstyle{i}$};
\node at (-.4,-.6)[left]{$\scriptstyle{i}$};
\node at (.4,.6)[right]{$\scriptstyle{i}$};
\node at (-.4,.6)[left]{$\scriptstyle{i}$};
}\,
and $\delta_{ij}$ is 
the Kronecker delta function.
\end{LEM}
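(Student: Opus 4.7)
The proof follows the standard bigon-fusion template from skein theory. My plan is to show first that the LHS is a scalar multiple of a single $(i,i)$-clasp strand when $i=j$ and vanishes when $i\neq j$, and then to pin down the scalar by a single closure computation.

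For the proportionality step, the LHS is an element of the clasped $A_2$ web space whose left end is capped by an $(i,i)$-clasp and whose right end is capped by a $(j,j)$-clasp. The clasps of type $(k,k)$ are idempotents that project onto the image of the irreducible $U_q(\mathfrak{sl}_3)$-representation of highest weight $(k,k)$, so by a Schur-lemma argument any element of this Hom-space must be zero when $i\neq j$ and a scalar multiple of the single $(i,i)$-clasp strand when $i=j$. A purely skein-theoretic justification proceeds by expanding the two inner $(n,n)$-clasps using Definition~\ref{doubleA2clasp} and then invoking Lemma~\ref{doubleA2claspprop} (the annihilation property of $(m,n)$-clasps) repeatedly; every term in the expansion produces either a ``cup'' that is killed by one of the outer $(i,i)$- or $(j,j)$-clasps, or the identity strand, which survives only when $i=j$.

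Granting the first step, write the equation as $\text{LHS} = \delta_{ij}\, c\, \bigl[\text{single }(i,i)\text{-clasp strand}\bigr]$ and close both sides by joining the two external triple-line ends. On the left, the closure is exactly the theta graph that defines $\theta(n,n,(i,i))$; on the right, the closure of a single $(i,i)$-clasp strand is exactly $\Delta(i,i)$. Hence $\theta(n,n,(i,i))=c\,\Delta(i,i)$, and solving gives $c=\theta(n,n,(i,i))/\Delta(i,i)$, which is the asserted coefficient.

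The main obstacle is making the proportionality rigorous: the Schur-type argument requires the standard (but non-trivial) equivalence between the $A_2$ web category and the category of $U_q(\mathfrak{sl}_3)$-intertwiners. If one prefers to avoid representation theory, the combinatorial route using Definition~\ref{doubleA2clasp} and Lemma~\ref{doubleA2claspprop} is elementary but requires a careful induction on $i+j$ and some bookkeeping of the types of cups produced by each clasp expansion. Once Step 1 is in place, the closure computation in Step 2 is immediate from the definitions of $\theta$ and $\Delta$ given before Lemma~\ref{bigongraph}.
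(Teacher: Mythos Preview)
Your two-step template---show the bigon is $\delta_{ij}$ times a scalar multiple of the $(i,i)$-strand, then close both ends to determine the scalar---is exactly the paper's strategy, and your Step~2 matches the paper verbatim.

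Where you diverge is in the skein-theoretic justification of Step~1. There are no $(n,n)$-clasps in this bigon: each trivalent vertex (see the definition immediately preceding the lemma) is built from two ordinary one-row $n$-clasps together with a single $(i,i)$-clasp, so the ``two inner $(n,n)$-clasps'' you propose to expand via Definition~\ref{doubleA2clasp} simply are not present, and the inductive bookkeeping you sketch never gets started. Your intuition that cups feed into the outer $(i,i)$- and $(j,j)$-clasps and are killed by Lemma~\ref{doubleA2claspprop} is the right one, but the device that \emph{produces} those cups is Theorem~\ref{A2bubble}, not Definition~\ref{doubleA2clasp}. Gluing the two vertices creates, between the four inner $n$-clasps, a bubble in precisely the form of Theorem~\ref{A2bubble} with parameters $k=n-i$, $l=n-j$ and both row labels equal to $n$. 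The paper applies that formula once; every term of the resulting sum carries turn-backs of sizes $t-(n-i)$ and $t-(n-j)$ into the outer $(i,i)$- and $(j,j)$-clasps, and Lemma~\ref{doubleA2claspprop} annihilates any term with a nonzero turn-back. Only $t=n-i=n-j$ survives, forcing $i=j$, and the surviving term is already the single $(i,i)$-strand---no induction on $i+j$ is needed.

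Your Schur-lemma alternative for Step~1 is valid but imports the Kuperberg equivalence with $U_q(\mathfrak{sl}_3)$-intertwiners; the paper's bubble-expansion route stays entirely inside the web calculus set up in Section~2.
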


\begin{proof}
Theorem~\ref{A2bubble} and Lemma~\ref{doubleA2claspprop} show the value is zero if $i\neq j$.
If $i=j$, 
then the coefficient is obtained by closing double-lined edges of both sides.
\end{proof}

\begin{PROP}[Recoupling Theorem]\label{recoupling}
\[
\bigg\langle\tikz[baseline=-.6ex, scale=0.5]{
\draw[triple={[line width=1.4pt, white] in [line width=2.2pt, black] in [line width=5.4pt, white]}]
(0,0) -- (1,0);
\draw[-<-=.5] (-1,1) -- (0,0);
\draw[->-=.5] (-1,-1) -- (0,0);
\draw[-<-=.5] (1,0) -- (2,1);
\draw[->-=.5] (1,0) -- (2,-1);
\node at (-1,1) [below]{$\scriptstyle{n}$};
\node at (-1,-1) [above]{$\scriptstyle{n}$};
\node at (2,1) [below]{$\scriptstyle{n}$};
\node at (2,-1) [above]{$\scriptstyle{n}$};
\node at (.5,0) [above]{$\scriptstyle{i}$};
}\,\bigg\rangle_{\! 3}
=\sum_{j=0}^{n}
\begin{Bmatrix}
n&n&(j,j)\\
n&n&(i,i)
\end{Bmatrix}
\bigg\langle\tikz[baseline=-.6ex, scale=0.5]{
\draw[triple={[line width=1.4pt, white] in [line width=2.2pt, black] in [line width=5.4pt, white]}]
(0,-.5) -- (0,.5);
\draw[-<-=.5] (-1,1) -- (0,.5);
\draw[->-=.5] (-1,-1) -- (0,-.5);
\draw[-<-=.5] (0,.5) -- (1,1);
\draw[->-=.5] (0,-.5) -- (1,-1);
\node at (-1,1) [below]{$\scriptstyle{n}$};
\node at (-1,-1) [above]{$\scriptstyle{n}$};
\node at (1,1) [below]{$\scriptstyle{n}$};
\node at (1,-1) [above]{$\scriptstyle{n}$};
\node at (0,0) [right]{$\scriptstyle{j}$};
}\,\bigg\rangle_{\!3}
\]
\end{PROP}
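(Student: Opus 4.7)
The plan is to expand the left-hand side in the basis of clasped $A_2$ webs given by the graphs appearing on the right-hand side, and then extract each coefficient by closing the identity with a suitable test graph and using Lemma~\ref{bigongraph}.

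First, I would recall that the clasped $A_2$ web space of diagrams with boundary configuration two pairs of $(n,n)$ strands above and two pairs of $(n,n)$ strands below, with the four $(n,n)$-clasps fixed, is spanned by the ``$s$-channel'' graphs $H_j$ (the right-hand side picture with vertical double edge colored $(j,j)$) as $j$ ranges over $0,1,\dots,n$. This reflects the decomposition of the relevant $\mathfrak{sl}_3$ intertwiner space; every clasped web with these boundary data lies in the span of the $H_j$, and Lemma~\ref{bigongraph} shows that they are linearly independent. Hence we may write
\[
\text{LHS} \;=\; \sum_{j=0}^{n} c_j\, H_j
\]
for unique coefficients $c_j\in\mathbb{Q}(q^{1/6})$, and the task reduces to identifying $c_j$.

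Next I would pair both sides with a ``test graph'' $T_{j'}$ obtained by reflecting $H_{j'}$ and gluing its four external $n$-strands to the four external $n$-strands of the equation. On the left, the four $Y$-vertices and two double-lined edges colored $(i,i)$ and $(j',j')$ join into the tetrahedral network whose value is, by definition, $\operatorname{Tet}\!\begin{bmatrix} n&n&(j',j')\\ n&n&(i,i)\end{bmatrix}$. On the right, the $j$-th term becomes two $(n,n)$-bigons joining the clasps of type $(j,j)$ and $(j',j')$. Applying Lemma~\ref{bigongraph} to the top bigon forces $j=j'$ and contributes a factor $\theta(n,n,(j',j'))/\Delta(j',j')$; the remaining bigon is then precisely the theta graph $\theta(n,n,(j',j'))$. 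Thus the right-hand side evaluates to
\[
\sum_{j=0}^{n} c_j\,\delta_{jj'}\,\frac{\theta(n,n,(j',j'))^{2}}{\Delta(j',j')} \;=\; c_{j'}\,\frac{\theta(n,n,(j',j'))^{2}}{\Delta(j',j')}.
\]

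Equating the two evaluations yields
\[
c_{j'} \;=\; \frac{\operatorname{Tet}\!\begin{bmatrix} n&n&(j',j')\\ n&n&(i,i)\end{bmatrix}\Delta(j',j')}{\theta(n,n,(j',j'))^{2}} \;=\; \begin{Bmatrix} n&n&(j',j')\\ n&n&(i,i)\end{Bmatrix},
\]
exactly matching the quantum $6j$-symbol defined just before the statement. This proves the identity.

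The main obstacle in this plan is the first step: justifying that the $H_j$ with $0\le j\le n$ genuinely span the space of clasped webs under consideration (so that the expansion $\sum c_j H_j$ exists). Once spanning is granted, Lemma~\ref{bigongraph} provides both the orthogonality needed to solve for the coefficients and the non-degeneracy of the closure pairing, so the rest of the argument is formal. Linear independence of the $H_j$ is automatic from Lemma~\ref{bigongraph}, but completeness requires either invoking the $\mathfrak{sl}_3$ representation-theoretic decomposition $V(n,0)\otimes V(0,n)\cong\bigoplus_{j=0}^{n}V(j,j)$ underlying Kuperberg's $A_2$ web calculus, or an explicit diagrammatic reduction argument using the double-clasp expansion (Definition~\ref{doubleA2clasp}) together with Theorem~\ref{A2bubble}.
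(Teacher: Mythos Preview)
Your argument is correct and is precisely the standard recoupling-theory proof the paper points to: the paper's own proof is a one-line reference to Kauffman--Lins together with Definition~\ref{doubleA2clasp} and Lemma~\ref{bigongraph}, and your write-up is exactly the fleshed-out version of that sketch, including the correct identification of the spanning step (handled via Definition~\ref{doubleA2clasp} and the bubble expansion) as the only nontrivial point.
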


\begin{proof}
We can prove this proposition in the same way as the recoupling theory of Temperley-Lieb algebra, see \cite[Chapter~7]{KauffmanLins94}, using Definition~\ref{doubleA2clasp} and Lemma~\ref{bigongraph}.
\end{proof}

We give a formula of the $\mathfrak{sl}_3$ colored Jones polynomial $J_{(n,0)}^{\mathfrak{sl}_3}(\left[2a_1,2a_2,\dots,2a_l\right];q)$ for a $2$-bridge link $\left[2a_1,2a_2,\dots,2a_l\right]$ using colored trivalent graphs. 
In this paper we define the $\mathfrak{sl}_3$ colored Jones polynomial for framed links obtained by a link diagram with the blackboard framing.
\begin{DEF}
\begin{align*}
&J_{(n,0)}^{\mathfrak{sl}_3}(\left[2a_1,2a_2,\dots,2a_l\right];q)\\
&\quad=
\begin{cases}
\bigg\langle\tikz[baseline=-.6ex, scale=1.2]{
\begin{scope}
\draw (-.3,.3) -- (-.5,.3) to[out=west, in=west] (-.5,.1) -- (-.3,.1);
\draw (-.3,-.1) -- (-.5,-.1) to[out=west, in=west] (-.5,-.3) -- (-.3,-.3);
\draw[fill=white] (-.7,.15) rectangle (-.4,.25);
\node at (-.6,.0) {$\scriptstyle{n}$};
\draw[fill=white] (-.7,-.15) rectangle (-.4,-.25);
\node at (-.6,-.4) {$\scriptstyle{n}$};
\draw[->-=.5] (-.3,.3) -- (.3,.3);
\draw (-.3,.1) -- (.3,.1);
\draw (-.3,-.1) -- (.3,-.1);
\draw[-<-=.5] (-.3,-.3) -- (.3,-.3);
\draw[fill=white] (-.2,-.2) rectangle (.2,.2);
\node at (.0,.0) {$\scriptstyle{2a_1}$};
\end{scope}
\begin{scope}[xshift=.6cm]
\draw (-.3,.3) -- (.3,.3);
\draw (-.3,.1) -- (.3,.1);
\draw (-.3,-.1) -- (.3,-.1);
\draw (-.3,-.3) -- (.3,-.3);
\draw[fill=white] (-.2,-.4) rectangle (.2,.0);
\node at (.0,-.2) {$\scriptstyle{2a_2}$};
\end{scope}
\begin{scope}[xshift=1.2cm]
\draw (-.3,.3) -- (.3,.3);
\draw (-.3,.1) -- (.3,.1);
\draw (-.3,-.1) -- (.3,-.1);
\draw (-.3,-.3) -- (.3,-.3);
\draw[fill=white] (-.2,-.2) rectangle (.2,.2);
\node at (.0,.0) {$\scriptstyle{2a_3}$};
\end{scope}
\begin{scope}[xshift=1.8cm]
\draw (-.3,.3) -- (.3,.3);
\draw (-.3,.1) -- (.3,.1);
\draw (-.3,-.1) -- (.3,-.1);
\draw (-.3,-.3) -- (.3,-.3);
\draw[fill=white] (-.2,-.4) rectangle (.2,.0);
\node at (.0,-.2) {$\scriptstyle{2a_4}$};
\end{scope}
\begin{scope}[xshift=2.4cm]
\draw (-.3,.3) -- (.3,.3);
\node at (.0,.0) {$\cdots$};
\node at (.0,-.2) {$\cdots$};
\end{scope}
\begin{scope}[xshift=3cm]
\draw (.3,.3) to[out=east, in=east] (.3,.1);
\draw (.3,-.1) to[out=east, in=east] (.3,-.3);
\draw (-.3,.3) -- (.3,.3);
\draw (-.3,.1) -- (.3,.1);
\draw (-.3,-.1) -- (.3,-.1);
\draw (-.3,-.3) -- (.3,-.3);
\draw[fill=white] (-.2,-.2) rectangle (.2,.2);
\node at (.0,.0) {$\scriptstyle{2a_l}$};
\end{scope}
}\bigg\rangle_{\!3} \Big/
\Delta(n,0)
&\ \text{if $l$ is odd,}\\
\bigg\langle\tikz[baseline=-.6ex, scale=1.2]{
\begin{scope}
\draw (-.3,.3) -- (-.5,.3) to[out=west, in=west] (-.5,.1) -- (-.3,.1);
\draw (-.3,-.1) -- (-.5,-.1) to[out=west, in=west] (-.5,-.3) -- (-.3,-.3);
\draw[fill=white] (-.7,.15) rectangle (-.4,.25);
\node at (-.6,.0) {$\scriptstyle{n}$};
\draw[fill=white] (-.7,-.15) rectangle (-.4,-.25);
\node at (-.6,-.4) {$\scriptstyle{n}$};
\draw[->-=.5] (-.3,.3) -- (.3,.3);
\draw (-.3,.1) -- (.3,.1);
\draw (-.3,-.1) -- (.3,-.1);
\draw[-<-=.5] (-.3,-.3) -- (.3,-.3);
\draw[fill=white] (-.2,-.2) rectangle (.2,.2);
\node at (.0,.0) {$\scriptstyle{2a_1}$};
\end{scope}
\begin{scope}[xshift=.6cm]
\draw (-.3,.3) -- (.3,.3);
\draw (-.3,.1) -- (.3,.1);
\draw (-.3,-.1) -- (.3,-.1);
\draw (-.3,-.3) -- (.3,-.3);
\draw[fill=white] (-.2,-.4) rectangle (.2,.0);
\node at (.0,-.2) {$\scriptstyle{2a_2}$};
\end{scope}
\begin{scope}[xshift=1.2cm]
\draw (-.3,.3) -- (.3,.3);
\draw (-.3,.1) -- (.3,.1);
\draw (-.3,-.1) -- (.3,-.1);
\draw (-.3,-.3) -- (.3,-.3);
\draw[fill=white] (-.2,-.2) rectangle (.2,.2);
\node at (.0,.0) {$\scriptstyle{2a_3}$};
\end{scope}
\begin{scope}[xshift=1.8cm]
\draw (-.3,.3) -- (.3,.3);
\draw (-.3,.1) -- (.3,.1);
\draw (-.3,-.1) -- (.3,-.1);
\draw (-.3,-.3) -- (.3,-.3);
\draw[fill=white] (-.2,-.4) rectangle (.2,.0);
\node at (.0,-.2) {$\scriptstyle{2a_4}$};
\end{scope}
\begin{scope}[xshift=2.4cm]
\draw (-.3,.3) -- (.3,.3);
\node at (.0,.0) {$\cdots$};
\node at (.0,-.2) {$\cdots$};
\end{scope}
\begin{scope}[xshift=3cm]
\draw (.3,.3) to[out=east, in=east] (.3,-.3);
\draw (.3,.1) to[out=east, in=east] (.3,-.1);
\draw (-.3,.3) -- (.3,.3);
\draw (-.3,.1) -- (.3,.1);
\draw (-.3,-.1) -- (.3,-.1);
\draw (-.3,-.3) -- (.3,-.3);
\draw[fill=white] (-.2,-.4) rectangle (.2,.0);
\node at (.0,-.2) {$\scriptstyle{2a_l}$};
\end{scope}
}\bigg\rangle_{\!3} \Big/
\Delta(n,0)
&\ \text{if $l$ is even,}
\end{cases}
\end{align*}
where, $a_1,a_2,\dots,a_l$ are non-zero integers and 
\begin{align*} 
\tikz[baseline=-.6ex]{
\draw (-.5,.2) -- (.5,.2);
\draw (-.5,-.2) -- (.5,-.2);
\draw[fill=white] (-.2,-.3) rectangle (.2,.3);
\node at (.0,.0) {${\scriptstyle m}$};
}\,
=
\begin{cases}
\,\tikz[baseline=-.6ex]{
\begin{scope}[xshift=-.5cm]
\draw (-.5,.2) -- +(-.2,0);
\draw (-.5,-.2) -- +(-.2,0);
\draw[white, double=black, double distance=0.4pt, ultra thick] 
(-.5,-.2) to[out=east, in=west] (.0,.2);
\draw[white, double=black, double distance=0.4pt, ultra thick] 
(-.5,.2) to[out=east, in=west] (.0,-.2);
\end{scope}
\node at (.0,.0){$\cdots$};
\node at (.0,-.2)[below]{$\scriptstyle{\text{right-handed }m\text{ half twists}}$};
\begin{scope}[xshift=.5cm]
\draw
(.5,-.2) -- +(.2,0)
(.5,.2) -- +(.2,0);
\draw[white, double=black, double distance=0.4pt, ultra thick] 
(0,-.2) to[out=east, in=west] (.5,.2);
\draw[white, double=black, double distance=0.4pt, ultra thick] 
(0,.2) to[out=east, in=west] (.5,-.2);
\end{scope}
}\,  & \text{if $m>$0,}\\
\,\tikz[baseline=-.6ex]{
\begin{scope}[xshift=-.5cm]
\draw (-.5,.2) -- +(-.2,0);
\draw (-.5,-.2) -- +(-.2,0);
\draw[white, double=black, double distance=0.4pt, ultra thick] 
(-.5,.2) to[out=east, in=west] (.0,-.2);
\draw[white, double=black, double distance=0.4pt, ultra thick] 
(-.5,-.2) to[out=east, in=west] (.0,.2);
\end{scope}
\node at (.0,.0){$\cdots$};
\node at (.0,-.2)[below]{$\scriptstyle{\text{left-handed } m\text{ half twists}}$};
\begin{scope}[xshift=.5cm]
\draw
(.5,-.2) -- +(.2,0)
(.5,.2) -- +(.2,0);
\draw[white, double=black, double distance=0.4pt, ultra thick] 
(0,.2) to[out=east, in=west] (.5,-.2);
\draw[white, double=black, double distance=0.4pt, ultra thick] 
(0,-.2) to[out=east, in=west] (.5,.2);
\end{scope}
}\, & \text{if $m<0$.}
\end{cases}
\end{align*} 
\end{DEF}
\begin{THM}\label{coloredjones}
\begin{align*}
&J_{(n,0)}^{\mathfrak{sl}_3}(\left[2a_1,2a_2,\dots,2a_l\right];q)\\
&\quad=\sum_{0\leq i_1, i_2\dots i_l\leq n}
\frac{\Delta(i_1,i_1)}{\Delta(n,0)}\frac{\theta(n,n,(i_l,i_l))}{\theta(n,n,(i_1,i_1))}
q^{-\frac{2}{3}(n^2+3n)(\sum_{k=1}^l a_k)}q^{\sum_{k=1}^l a_k(i_k^2+2i_k)}\\
&\qquad\times \prod_{k=1}^{l-1}
\begin{Bmatrix}
n & n & (i_{k+1},i_{k+1})\\
n & n & (i_{k},i_{k})
\end{Bmatrix}.
\end{align*}
\end{THM}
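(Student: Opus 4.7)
The strategy is to apply the full twist formula (Theorem \ref{A2mfull}) at each twist region of the $2$-bridge diagram and glue the resulting local pictures using the recoupling theorem (Proposition \ref{recoupling}).

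First, I would extract from Theorem \ref{A2mfull} the eigenvalue statement that a full twist on an $(n,n)$-cabled pair of strands acts on the double-clasp of type $(i,i)$ by the scalar $q^{-\frac{2}{3}(n^2+3n)} q^{i^2+2i}$. This follows by specializing Theorem \ref{A2mfull} to $m=1$, sandwiching both sides between $(i,i)$-double-clasps, and invoking Lemma \ref{bigongraph} to collapse the right-hand side to the single surviving term. Iterating gives the monomial $q^{-\frac{2}{3}(n^2+3n)\sum_k a_k} q^{\sum_k a_k(i_k^2+2i_k)}$ appearing in the formula, since a $2a_k$-half-twist box is framed-isotopic to the $a_k$-full-twist box acting on the relevant cabled pair.

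Next, between each pair of consecutive twist regions I would apply Proposition \ref{recoupling} to insert a complete set of $(i_k,i_k)$-double-clasps. In a $2$-bridge diagram the horizontal and vertical twist regions are diagonalized by the two complementary recoupling channels, so each transition contributes precisely the $6j$-symbol $\begin{Bmatrix} n & n & (i_{k+1},i_{k+1}) \\ n & n & (i_k, i_k) \end{Bmatrix}$. After these insertions, all twist regions are simultaneously diagonal, and each contributes its eigenvalue from the previous step while the $l-1$ transitions contribute the product of $6j$-symbols.

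Finally I would close up the diagram. The leftmost $n$-colored turnback together with the first $(i_1,i_1)$-double-clasp forms a closed bigon, which by Lemma \ref{bigongraph} collapses to a single $(i_1,i_1)$-strand whose closure evaluates to $\Delta(i_1,i_1)$; dividing by $\Delta(n,0)$ as prescribed by the definition of $J^{\mathfrak{sl}_3}_{(n,0)}$ yields the factor $\Delta(i_1,i_1)/\Delta(n,0)$. The right-hand end of the diagram closes to a $\theta$-graph with value $\theta(n,n,(i_l,i_l))$ by Lemma \ref{bigongraph} again, while the $6j$-symbol conventions produce a $\theta(n,n,(i_1,i_1))$ in the denominator from the leftmost transition. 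Assembling everything gives the stated formula. The main obstacle is the diagrammatic bookkeeping: matching orientations and the alternation of horizontal and vertical twist regions to the recoupling channels so that exactly one $6j$-symbol of the given form appears at each transition, and verifying that the two cases $l$ odd and $l$ even from the definition of $J^{\mathfrak{sl}_3}_{(n,0)}$ produce the same formula (the difference between the two right-end closures is absorbed into the common factor $\theta(n,n,(i_l,i_l))$ after applying Lemma \ref{bigongraph}).
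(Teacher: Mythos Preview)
Your proposal is correct and follows essentially the same route as the paper. The paper isolates the twist eigenvalue as a separate lemma (Lemma~\ref{twistcoeff}) and then applies recoupling at each twist box starting from the trivial $(0,0)$-edge---so the factor $\Delta(i_1,i_1)/\theta(n,n,(i_1,i_1))$ arises as the special $6j$-symbol $\left\{\begin{smallmatrix}n&n&(i_1,i_1)\\ n&n&(0,0)\end{smallmatrix}\right\}$ rather than from a left closure as you suggest---before collapsing consecutive double-edges from left to right via recoupling and Lemma~\ref{bigongraph} to produce the remaining $6j$-symbols and the terminal $\theta(n,n,(i_l,i_l))$.
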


\begin{LEM}\label{twistcoeff}
\[
\bigg\langle\,\tikz[baseline=-.6ex]{
\draw[triple={[line width=1.4pt, white] in [line width=2.2pt, black] in [line width=5.4pt, white]}]
(-1.5,0) -- (-1,0);
\draw (-1,0) to[out=north east, in=west] (-.5,.4);
\draw (-1,0) to[out=south east, in=west] (-.5,-.4);
\draw[->-=1, white, double=black, double distance=0.4pt, ultra thick] 
(-.5,-.4) to[out=east, in=west] (.0,.4);
\draw[-<-=1, white, double=black, double distance=0.4pt, ultra thick] 
(-.5,.4) to[out=east, in=west] (.0,-.4);
\draw[white, double=black, double distance=0.4pt, ultra thick] 
(0,-.4) to[out=east, in=west] (.5,.4);
\draw[white, double=black, double distance=0.4pt, ultra thick] 
(0,.4) to[out=east, in=west] (.5,-.4);
\node at (-.5,-.5) [left]{$\scriptstyle{n}$};
\node at (-.5,.5) [left]{$\scriptstyle{n}$};
\node at (-1.5,0) [above]{$\scriptstyle{i}$};
}\,\bigg\rangle_{\!3}
=
q^{-\frac{2}{3}(n^2+3n)+i^2+2i}
\bigg\langle\,\tikz[baseline=-.6ex]{
\draw[triple={[line width=1.4pt, white] in [line width=2.2pt, black] in [line width=5.4pt, white]}]
(-1.5,0) -- (-1,0);
\draw[-<-=.5] (-1,0) to[out=north east, in=west] (-.5,.4);
\draw[->-=.5] (-1,0) to[out=south east, in=west] (-.5,-.4);
\node at (-.5,-.5) [left]{$\scriptstyle{n}$};
\node at (-.5,.5) [left]{$\scriptstyle{n}$};
\node at (-1.5,0) [above]{$\scriptstyle{i}$};
}\,\bigg\rangle_{\!3}
\]
\end{LEM}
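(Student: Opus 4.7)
The plan is to apply Theorem~\ref{A2mfull} with $m=1$ and then collapse the resulting sum using the orthogonality of type-$(k,k)$ double clasps.

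First, I would expand the single full twist on the two $n$-strands via Theorem~\ref{A2mfull} with $m=1$:
\[
\text{(one full twist)}
=
q^{-\frac{2}{3}(n^2+3n)} \sum_{k=0}^{n} q^{n-k+k^2+2k} \frac{(q)_n}{(q)_k} {n \choose n-k,\, k}_{q}\, U_k,
\]
where each $U_k$ denotes the web containing a $(k,k)$-type double clasp in the middle, flanked by $(n,n)$-clasps at the four corners and $(n-k)$-colored bubbles on each side.

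Next, I would pre-compose this expansion with the triple-edge labeled $i$ on the left. The $(n,n)$-clasps implicit in both the triple-edge $i$ and in $U_k$ fuse by idempotency, leaving a composite web whose core is a bigon whose vertices are the $(i,i)$-double clasp (from the triple-edge $i$) and the $(k,k)$-double clasp (from $U_k$), joined by a pair of $n$-strands. By Lemma~\ref{bigongraph}, this bigon vanishes unless $k=i$, and when $k=i$ it collapses to a single triple-edge $i$ with a normalization factor determined by $\theta(n,n,(i,i))$ and $\Delta(i,i)$.

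Thus only the $k=i$ term survives, and it remains to verify that the combined scalar---the coefficient from Theorem~\ref{A2mfull} at $k=i$ together with the normalization from the bigon collapse---simplifies to $q^{-\frac{2}{3}(n^2+3n)+i^2+2i}$. This reduces to a $q$-arithmetic identity in $q$-Pochhammer symbols and binomial coefficients, which can be checked using the closed-form expressions for $\Delta(i,i)$ and $\theta(n,n,(i,i))$ from the preceding Lemma together with the conversion ${a \brack b} = q^{-b(a-b)/2}\binom{a}{b}_{q}$ between quantum and $q$-binomials.

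The main obstacle is this last scalar verification: although the structural argument is clean, carefully tracking all the $q$-Pochhammer and binomial factors---particularly making sure that the normalizations implicit in \emph{both} the triple-edge $i$ and the web $U_k$ (each of which itself contains bubble and double-clasp structure) are not double-counted when applying the bigon collapse---is where the calculation gets delicate.
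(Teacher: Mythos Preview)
Your bigon-collapse step does not go through. The webs $U_k$ produced by Theorem~\ref{A2mfull} carry only single $n$-clasps at the four corners together with $(n-k)$-caps; there is \emph{no} $(k,k)$-double clasp on the middle $k$-strands. Hence the composite $(\text{triple-edge }i)\circ U_k$ is not the bigon of Lemma~\ref{bigongraph}, and that lemma cannot be invoked to kill the $k\neq i$ terms. In fact those terms do not vanish: for $n=1$, $i=0$ one has $U_1=\mathrm{id}$, so $(\text{triple-edge }0)\circ U_1=(\text{triple-edge }0)$ with coefficient $1$, while $(\text{triple-edge }0)\circ U_0=[3]\cdot(\text{triple-edge }0)$. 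Both the $k=0$ and $k=1$ summands contribute, and only their combination
\[
q^{-8/3}\bigl(q(1-q)[3]+q^{3}\bigr)=q^{-8/3}
\]
yields the correct scalar. So the ``only $k=i$ survives'' claim is false, and with it the clean scalar identification collapses: you would instead face a genuine sum over $k$ whose evaluation is no easier than the original problem.

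The paper's argument avoids this by first using \cite[Lemma~3.16]{Yuasa16} to slide the $(n-i)$-cap of the trivalent vertex past the crossings, picking up the factor $q^{-\frac{2}{3}(n^2+3n-i^2-3i)}$ and leaving a full twist on \emph{$i$-strands} adjacent to the $(i,i)$-double clasp. Now Theorem~\ref{A2mfull} (with $n$ replaced by $i$, $m=1$) applies, and here Lemma~\ref{doubleA2claspprop} legitimately kills every summand except $k_1=i$, since the remaining caps feed directly into the double clasp. The surviving term contributes $q^{\frac{1}{3}i^2}$, and the two factors combine to $q^{-\frac{2}{3}(n^2+3n)+i^2+2i}$.
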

\begin{proof}
It is shown by using the following formula~\cite[Lemma~{3.16}]{Yuasa16}:
\[
\Bigg\langle\,\tikz[baseline=-.6ex, rotate=180]{
\draw 
(-.5,.4) -- +(-.2,0)
(-.5,-.4) -- +(-.2,0);
\draw[->-=.7] (.0,-.5) -- +(.4,0);
\draw[-<-=.7] (.0,.5) -- +(.4,0);
\draw[-<-=.2, white, double=black, double distance=0.4pt, ultra thick] 
(-.5,-.4) to[out=east, in=west] (.0,.4);
\draw[->-=.2, white, double=black, double distance=0.4pt, ultra thick] 
(-.5,.4) to[out=east, in=west] (.0,-.4);
\draw[-<-=.5] (.1,.4) to[out=east, in=east] (.1,-.4);
\draw[fill=white] (.0,-.6) rectangle +(.1,.4);
\draw[fill=white] (-.5,-.6) rectangle +(-.1,.4);
\draw[fill=white] (.0,.6) rectangle +(.1,-.4);
\draw[fill=white] (-.5,.6) rectangle +(-.1,-.4);
\node at (-.4,-.6){$\scriptstyle{n}$};
\node at (.5,-.6){$\scriptstyle{i}$};
\node at (-.4,.6){$\scriptstyle{n}$};
\node at (.5,.6){$\scriptstyle{i}$};
\node at (.3,0)[left]{$\scriptstyle{n-i}$};
}\,\Bigg\rangle_{\! 3}
=q^{-\frac{n^2+3n-i^2-3i}{3}}
\Bigg\langle\,\tikz[baseline=-.6ex, rotate=180]{
\draw (-.4,.4) -- +(-.2,0);
\draw (.4,-.4) -- +(.2,0);
\draw (-.4,-.4) -- +(-.2,0);
\draw (.4,.4) -- +(.2,0);
\draw[-<-=.3, white, double=black, double distance=0.4pt, ultra thick] 
(-.4,-.5) to[out=east, in=west] (.4,.4);
\draw[->-=.3, white, double=black, double distance=0.4pt, ultra thick] 
(-.4,.5) to[out=east, in=west] (.4,-.4);
\draw[->-=.5] (-.4,.3) to[out=east, in=east] (-.4,-.3);
\draw[fill=white] (.4,-.6) rectangle +(.1,.4);
\draw[fill=white] (-.4,-.6) rectangle +(-.1,.4);
\draw[fill=white] (.4,.6) rectangle +(.1,-.4);
\draw[fill=white] (-.4,.6) rectangle +(-.1,-.4);
\node at (.4,-.6)[right]{$\scriptstyle{i}$};
\node at (.4,.6)[right]{$\scriptstyle{i}$};
\node at (-.4,0)[right]{$\scriptstyle{n-i}$};
}\,\Bigg\rangle_{\! 3},
\]
and Theorem~\ref{A2mfull} with $m=1$.
\end{proof}

\begin{proof}[Proof of Theorem~\ref{coloredjones}]
\begin{align}
\bigg\langle\tikz[baseline=-.6ex, scale=.5]{
\draw[-<-=.2] (-2,.5) -- (1,.5);
\draw[->-=.2] (-2,-.5) -- (1,-.5);
\draw[fill=white] (-1,.2) rectangle (-.8,.8);
\draw[fill=white] (-1,-.2) rectangle (-.8,-.8);
\draw[fill=white] (-.5,-1) rectangle (.5,1);
\node at (-1,.5) [above left]{${\scriptstyle n}$};
\node at (-1,-.5) [below left]{${\scriptstyle n}$};
\node at (.0,.0) {${\scriptstyle 2a_k}$};
}\,\bigg\rangle_{\!3}
&=
\bigg\langle\tikz[baseline=-.6ex, scale=.5]{
\draw[triple={[line width=1.4pt, white] in [line width=2.2pt, black] in [line width=5.4pt, white]}]
(-1.2,-.5) -- (-1.2,.5);
\draw[-<-=.1, -<-=.4] (-2,.5) -- (1,.5);
\draw[->-=.1, ->-=.4] (-2,-.5) -- (1,-.5);
\draw[fill=white] (-.5,-1) rectangle (.5,1);
\node at (-1.2,0) [left]{${\scriptstyle 0}$};
\node at (-1.2,.5) [above left]{${\scriptstyle n}$};
\node at (-1.2,-.5) [below left]{${\scriptstyle n}$};
\node at (-1.2,.5) [above right]{${\scriptstyle n}$};
\node at (-1.2,-.5) [below right]{${\scriptstyle n}$};
\node at (.0,.0) {${\scriptstyle 2a_k}$};
}\,\bigg\rangle_{\!3}
=
\sum_{j=0}^{n}
\begin{Bmatrix}
n&n&(i_k,i_k)\\
n&n&(0,0)
\end{Bmatrix}
\bigg\langle\tikz[baseline=-.6ex, scale=.5]{
\draw[triple={[line width=1.4pt, white] in [line width=2.2pt, black] in [line width=5.4pt, white]}]
(-1.5,0) -- (-1,0);
\draw[-<-=.5] (-2,.5) to[out=east, in=north west] (-1.5,0);
\draw[->-=.5] (-2,-.5) to[out=east, in=south west] (-1.5,0);
\draw[-<-=.5] (-1,0) to[out=north east, in=west] (-.5,.5);
\draw[->-=.5] (-1,0) to[out=south east, in=west] (-.5,-.5);
\draw[-<-=.1, -<-=.4] (0,.5) -- (1,.5);
\draw[->-=.1, ->-=.4] (0,-.5) -- (1,-.5);
\draw[fill=white] (-.5,-1) rectangle (.5,1);
\node at (-1.25,0) [above]{${\scriptstyle i_k}$};
\node at (-1.2,.5) [above left]{${\scriptstyle n}$};
\node at (-1.2,-.5) [below left]{${\scriptstyle n}$};
\node at (-1.2,.5) [above right]{${\scriptstyle n}$};
\node at (-1.2,-.5) [below right]{${\scriptstyle n}$};
\node at (.0,.0) {${\scriptstyle 2a_k}$};
}\,\bigg\rangle_{\!3}\label{eq1}\\
&=q^{-\frac{2a_k}{3}(n^2+3n)+a_k(i_k^2+2i_k)}\frac{\Delta(i_k,i_k)}{\theta(n,n,(i_k,i_k))}
\bigg\langle\tikz[baseline=-.6ex, scale=.5]{
\draw[triple={[line width=1.4pt, white] in [line width=2.2pt, black] in [line width=5.4pt, white]}]
(-.5,0) -- (.5,0);
\draw[-<-=.5] (-1,.5) to[out=east, in=north west] (-.5,0);
\draw[->-=.5] (-1,-.5) to[out=east, in=south west] (-.5,0);
\draw[-<-=.5] (.5,0) to[out=north east, in=west] (1,.5);
\draw[->-=.5] (.5,0) to[out=south east, in=west] (1,-.5);
\node at (0,0) [above]{${\scriptstyle i_k}$};
\node at (1,.5) [above]{${\scriptstyle n}$};
\node at (1,-.5) [below]{${\scriptstyle n}$};
\node at (-1,.5) [above]{${\scriptstyle n}$};
\node at (-1,-.5) [below]{${\scriptstyle n}$};
}\,\bigg\rangle_{\!3}.\notag
\end{align}
This equation is derived from Proposition~\ref{recoupling}, Lemma~\ref{twistcoeff} and 
$\begin{Bmatrix}
n&n&(i_k,i_k)\\
n&n&(0,0)
\end{Bmatrix}
=\frac{\Delta(i_k,i_k)}{\theta(n,n,(i_k,i_k))}
$.
By Proposition~\ref{recoupling} and Lemma~\ref{bigongraph},
\begin{align}
\bigg\langle\tikz[baseline=-.6ex, scale=.5]{
\draw[triple={[line width=1.4pt, white] in [line width=2.2pt, black] in [line width=5.4pt, white]}]
(0,0) -- (1,0);
\draw[triple={[line width=1.4pt, white] in [line width=2.2pt, black] in [line width=5.4pt, white]}]
(2,-.6) -- (3,-.6);
\draw[->-=.5] (0,0) to[out=west, in=west] (0,.9) -- (1.5,.9);
\draw[-<-=.5] (0,0) to[out=west, in=west] (0,-.9) -- (1.5,-.9);
\draw[-<-=.8] (1,0) to[out=north east, in=west] (1.5,.3);
\draw[->-=.8] (1,0) to[out=south east, in=west] (1.5,-.3);
\draw (1.5,.9) -- (3.5,.9);
\draw (1.5,.3) -- (3.5,.3);
\draw (1.5,-.3) to[out=east, in=north west] (2,-.6);
\draw (1.5,-.9) to[out=east, in=south west] (2,-.6);
\draw[->-=.8] (3,-.6) to[out=north east, in=west] (3.5,-.3);
\draw[-<-=.8] (3,-.6) to[out=south east, in=west] (3.5,-.9);
\node at (.5,0) [above]{${\scriptstyle i_k}$};
\node at (2.5,-.6) [above]{${\scriptstyle i_{k+1}}$};
\node at (-.5,0) [above]{${\scriptstyle n}$};
\node at (-.5,0) [below]{${\scriptstyle n}$};
\node at (1,0) [above]{${\scriptstyle n}$};
\node at (1,0) [below]{${\scriptstyle n}$};
\node at (3.5,-.3) [above]{${\scriptstyle n}$};
\node at (3.5,-.9) [below]{${\scriptstyle n}$};
}\,\bigg\rangle_{\!3}
&=
\sum_{s=0}^{n}
\begin{Bmatrix}
n&n&(s,s)\\
n&n&(i_k,i_k)
\end{Bmatrix}
\bigg\langle\tikz[baseline=-.6ex, scale=.5]{
\draw[triple={[line width=1.4pt, white] in [line width=2.2pt, black] in [line width=5.4pt, white]}]
(.5,.3) -- (.5,-.3);
\draw[triple={[line width=1.4pt, white] in [line width=2.2pt, black] in [line width=5.4pt, white]}]
(2,-.6) -- (3,-.6);
\draw[->-=.5] (.5,.3) to[out=west, in=west] (.5,.9) -- (1.5,.9);
\draw[-<-=.5] (.5,-.3) to[out=west, in=west] (.5,-.9) -- (1.5,-.9);
\draw[-<-=.8] (.5,.3) -- (1.5,.3);
\draw[->-=.8] (.5,-.3) -- (1.5,-.3);
\draw (1.5,.9) -- (3.5,.9);
\draw (1.5,.3) -- (3.5,.3);
\draw (1.5,-.3) to[out=east, in=north west] (2,-.6);
\draw (1.5,-.9) to[out=east, in=south west] (2,-.6);
\draw[->-=.8] (3,-.6) to[out=north east, in=west] (3.5,-.3);
\draw[-<-=.8] (3,-.6) to[out=south east, in=west] (3.5,-.9);
\node at (.5,0) [left]{${\scriptstyle s}$};
\node at (2.5,-.6) [above]{${\scriptstyle i_{k+1}}$};
\node at (0,.6) {${\scriptstyle n}$};
\node at (0,-.6) {${\scriptstyle n}$};
\node at (1,.6) {${\scriptstyle n}$};
\node at (1,-.6) {${\scriptstyle n}$};
\node at (3.5,0) {${\scriptstyle n}$};
\node at (3.5,-.6) {${\scriptstyle n}$};
}\,\bigg\rangle_{\!3}\label{eq2}\\
&=
\begin{Bmatrix}
n&n&(i_{k+1},i_k+1)\\
n&n&(i_k,i_k)
\end{Bmatrix}
\bigg\langle\tikz[baseline=-.6ex, scale=.5]{
\draw[triple={[line width=1.4pt, white] in [line width=2.2pt, black] in [line width=5.4pt, white]}]
(2,-.6) -- (3,-.6);
\draw[-<-=.5] (1.5,-.3) to[out=west, in=west] (1.5,.3);
\draw[->-=.5] (1.5,-.9) to[out=west, in=west] (1.5,.9);
\draw (1.5,.9) -- (3.5,.9);
\draw (1.5,.3) -- (3.5,.3);
\draw (1.5,-.3) to[out=east, in=north west] (2,-.6);
\draw (1.5,-.9) to[out=east, in=south west] (2,-.6);
\draw[->-=.8] (3,-.6) to[out=north east, in=west] (3.5,-.3);
\draw[-<-=.8] (3,-.6) to[out=south east, in=west] (3.5,-.9);
\node at (2.5,-.6) [above]{${\scriptstyle i_{k+1}}$};
\node at (1,0) [left]{${\scriptstyle n}$};
\node at (2.5,.6) {${\scriptstyle n}$};
\node at (3.5,0) {${\scriptstyle n}$};
\node at (3.5,-.6) {${\scriptstyle n}$};
}\,\bigg\rangle_{\!3}.\notag
\end{align}
First, 
we apply (\ref{eq1}) to boxed $2a_k$ for all $0\leq k\leq l$ of the two bridge link diagram. 
Next, we repeatedly apply (\ref{eq2}) to the leftmost doubled edge. 
Finally, 
we obtain $\theta(n,n,(i_l,i_l))$.
\end{proof}

\section{The tail of the $\mathfrak{sl}_3$ colored Jones polynomial for $T(2,2m)$}
We discuss a stability of coefficients of the $\mathfrak{sl}_3$ colored Jones polynomial for the torus link $T(2,2m)$ in this section for positive integer $m$. 
Two explicit formulas of $J_{(n,0)}^{\mathfrak{sl}_3}(T(2,2m))$ are obtained from Theorem~\ref{coloredjones} and \cite[Theorem~5.7]{Yuasa16} because $T(2,2m)$ has a presentation $\left[2m\right]$ as a $2$-bridge link.
We write these two formulas below. Let $m$ be a positive integer.

From Theorem~\ref{coloredjones},
\begin{align*}
\psi_n^{(m)}(q)
&=J_{(n,0)}^{\mathfrak{sl}_3}(T(2,2m))=q^{-\frac{2m}{3}(n^2+3n)}\sum_{i=0}^n\frac{\Delta(i,i)}{\Delta(n,0)}q^{m(i^2+2i)}\\
&=q^{-\frac{2m}{3}(n^2+3n)+n}\sum_{i=0}^nq^{-2i}q^{m(i^2+2i)}\frac{(1-q^{i+1})^3(1+q^{i+1})}{(1-q)(1-q^{n+1})(1-q^{n+2})}.
\end{align*}

From \cite[Theorem~5.7]{Yuasa16},
\begin{align*}
g_n^{(m)}(q)
&=J_{(n,0)}^{\mathfrak{sl}_3}(T(2,2m))\\
&=q^{-\frac{2m}{3}(n^2+3n)+n}\sum_{0\leq k_m\leq\dots\leq k_2\leq k_1\leq n}q^{-2k_m}q^{\sum_{j=1}^m(k_i^2+2k_i)}\\
&\quad\times \frac{(q)_n^2}{(q)_{k_m}^2(q)_{n-k_1}(q)_{k_1-k_2}\dots(q)_{k_{m-1}-k_m}}
\frac{(1-q^{n+1})(1-q^{n+2})}{(1-q^{n-k_m+1})(1-q^{n-k_m+2})}.
\end{align*}

\begin{DEF}
Suppose $f(q),f_n(q)\in\mathbb{Z}[[q]]$ for $n\geq 1$. 
{\em The limit of $\{f_n(q)\}_n$ is $f(q)$}, 
denoted $\displaystyle \lim_{n\to\infty}f_n(q)=f(q) $, 
means that $f_n(q)=f(q)$ in $\mathbb{Z}[[q]]/q^{n+1}\mathbb{Z}[[q]]$ for all $n$.
\end{DEF}

We can see that the minimum degree of $J_{(n,0)}^{\mathfrak{sl}_3}(T(2,2m))$ is $q^{-\frac{2m}{3}(n^2+3n)+n}$ and consider the limit of $\Psi_n^{(m)}(q)=q^{\frac{2m}{3}(n^2+3n)-n}\psi_n(q)$ and $G_n^{(m)}(q)=q^{\frac{2m}{3}(n^2+3n)-n}g_n(q)$.
The following lemma ensures the existence of the limit.
\begin{LEM}
$\Psi_n^{(m)}(q)=\Psi_{n+1}^{(m)}(q)$ in $\mathbb{Z}[[q]]/q^{n+1}\mathbb{Z}[[q]]$.
\end{LEM}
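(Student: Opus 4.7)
The plan is to pull the $i$-independent denominator out of the defining sum, writing
\[
\Psi_n^{(m)}(q) = \frac{S_n(q)}{(1-q)(1-q^{n+1})(1-q^{n+2})},
\qquad
S_n(q) := \sum_{i=0}^{n} q^{-2i + m(i^2+2i)}(1-q^{i+1})^3(1+q^{i+1}),
\]
and likewise $\Psi_{n+1}^{(m)}(q) = S_{n+1}(q)/[(1-q)(1-q^{n+2})(1-q^{n+3})]$. A quick check confirms that each summand of $S_n$ lies in $\mathbb{Z}[[q]]$: for $i\geq 1$ and $m\geq 1$ the exponent $-2i+m(i^2+2i)=i\bigl(mi+2(m-1)\bigr)$ is nonnegative, and the $i=0$ summand is simply $(1-q)^3(1+q)\in\mathbb{Z}[q]$.

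Next I would reduce everything modulo $q^{n+1}$. Because $(1-q^{n+j})^{-1} = 1 + q^{n+j} + q^{2(n+j)} + \cdots$ for each $j\in\{1,2,3\}$, all three of these inverses are congruent to $1$ modulo $q^{n+1}$ in $\mathbb{Z}[[q]]$, and hence
\[
\Psi_n^{(m)}(q) \;\equiv\; \frac{S_n(q)}{1-q},
\qquad
\Psi_{n+1}^{(m)}(q) \;\equiv\; \frac{S_{n+1}(q)}{1-q} \pmod{q^{n+1}}.
\]
It therefore suffices to show that $S_{n+1}(q)-S_n(q)\in q^{n+1}\mathbb{Z}[[q]]$, since $(1-q)^{-1}\in\mathbb{Z}[[q]]$ and $q^{n+1}\mathbb{Z}[[q]]$ is an ideal.

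Finally, $S_{n+1}(q)-S_n(q)$ is precisely the new summand at $i=n+1$, namely
\[
q^{-2(n+1) + m((n+1)^2+2(n+1))}(1-q^{n+2})^3(1+q^{n+2})
= q^{(n+1)(m(n+3)-2)}(1-q^{n+2})^3(1+q^{n+2}).
\]
Since $(1-q^{n+2})^3(1+q^{n+2})$ has constant term $1$, the $q$-valuation of this element equals $(n+1)(m(n+3)-2)$ exactly. For any $m\geq 1$ and $n\geq 0$ one has $m(n+3)-2\geq n+1$, so the valuation is at least $(n+1)^2\geq n+1$, as required.

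The argument is essentially bookkeeping: the only substantive step is the elementary inequality $(n+1)(m(n+3)-2)\geq n+1$, which guarantees that the single added summand at $i=n+1$ is absorbed into the ideal $q^{n+1}\mathbb{Z}[[q]]$. I do not anticipate any significant obstacle, and the analogous statement for $G_n^{(m)}$ (needed to identify the two tails) should follow by the same device, applied to the more elaborate multi-sum expression from Theorem~\ref{A2mfull}.
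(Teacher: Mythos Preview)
Your argument is correct and is precisely the explicit version of what the paper asserts in one line (``It is clear from the expression of $\Psi_{n+1}^{(m)}(q)$''). You have simply supplied the bookkeeping the paper suppresses: reducing the $n$-dependent factors $(1-q^{n+j})^{-1}$ to $1$ modulo $q^{n+1}$ and checking that the extra summand at $i=n+1$ has $q$-valuation at least $n+1$.
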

\begin{proof}
It is clear from the expression of $\Psi_{n+1}^{(m)}(q)$.
\end{proof}

We explicitly give the limit of $\{\Psi_n^{(m)}(q)\}_n$ and $\{G_n^{(m)}\}_n$.
First, we can easily obtain the following:
\begin{equation}\label{Psi}
\lim_{n\to\infty}\Psi_n^{(m)}(q)=\sum_{i=0}^\infty q^{-2i}q^{m(i^2+2i)}\frac{(1-q^{i+1})^3(1+q^{i+1})}{1-q}.
\end{equation}
Next, 
we consider the limit of $\{G_n^{(m)}\}_n$.
In $\mathbb{Z}[[q]]/q^{n+1}\mathbb{Z}[[q]]$, 
\begin{align*}
\frac{q^{k_m^2}}{1-q^{n-k_m+1}}&=q^{k_m}(1+q^{n-k_m+1}+q^{2(n-k_m+1)}+\dots)\\
&=q^{k_m^2}+q^{n+1+k_m^2-k_m}+(\,\text{higher-order tems}\,)\\
&=q^{k_m^2}
\end{align*}
because $k_m^2-k_m\geq 0$.
In a similar way, 
we obtain 
$\frac{q^{k_m^2}}{1-q^{n-k_m+2}}=q^{k_m^2}$, $q^{k_1^2+2k_1}\frac{(q)_n}{(q)_{n-k_1}}=q^{k_1^2+2k_1}$. 
Form the above, 
we conclude that the limit of $\{G_n^{(m)}\}_n$ is 
\begin{equation}\label{G}
\lim_{n\to\infty}G_n^{(m)}(q)
=(q)_{\infty}\sum_{0\leq k_m\leq\dots\leq k_2\leq k_1}\frac{q^{-2k_m}q^{\sum_{j=1}^m(k_i^2+2k_i)}}{(q)_{k_m}^2(q)_{k_1-k_2}\dots(q)_{k_{m-1}-k_m}}
\end{equation}

Consequently, 
we have the following identity of $q$-series.

\begin{THM}\label{qseries}
\footnotesize
\[
\sum_{i=0}^\infty q^{-2i}q^{m(i^2+2i)}\frac{(1-q^{i+1})^3(1+q^{i+1})}{1-q}=(q)_{\infty}\sum_{0\leq k_m\leq\dots\leq k_2\leq k_1}\frac{q^{-2k_m}q^{\sum_{j=1}^m(k_i^2+2k_i)}}{(q)_{k_m}^2(q)_{k_1-k_2}\dots(q)_{k_{m-1}-k_m}}.\]
\normalsize
\end{THM}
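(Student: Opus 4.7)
The plan is to identify both sides of the claimed identity as limits of the same sequence of colored Jones polynomials, computed via two independent formulas. Specifically, the $\mathfrak{sl}_3$ colored Jones polynomial $J_{(n,0)}^{\mathfrak{sl}_3}(T(2,2m))$ admits the presentation $\psi_n^{(m)}(q)$ obtained by specializing Theorem~\ref{coloredjones} to the two-bridge presentation $[2m]$ of $T(2,2m)$, and simultaneously the presentation $g_n^{(m)}(q)$ obtained from the full twist formula \cite[Theorem~5.7]{Yuasa16}. Since both are equal to the same invariant, they are equal as polynomials in $q$ for every $n\geq 1$, and the identity in the theorem will fall out by showing that their common normalization converges to the two stated $q$-series.

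First I would verify the specialization $\psi_n^{(m)}(q)$ from Theorem~\ref{coloredjones}: the two-bridge link $[2m]$ has $l=1$, so the sum collapses to a single index $i$, and using $\theta(n,n,(i_l,i_l))/\theta(n,n,(i_1,i_1))=1$ together with the explicit value $\Delta(i,i)/\Delta(n,0)$ from Lemma~3.2(1), one reads off the closed form displayed just before Definition~4.1. Multiplying by $q^{\frac{2m}{3}(n^2+3n)-n}$ yields $\Psi_n^{(m)}(q)$, and since the $i$-th summand is independent of $n$ apart from the denominator $(1-q^{n+1})(1-q^{n+2})$, which becomes $1$ in $\mathbb{Z}[[q]]/q^{n+1}\mathbb{Z}[[q]]$, the stability lemma follows. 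Taking $n\to\infty$ term by term gives equation~(\ref{Psi}), the left-hand side of the theorem.

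Next I would perform the analogous analysis on $g_n^{(m)}(q)$. After multiplying by $q^{\frac{2m}{3}(n^2+3n)-n}$, the summand over $0\le k_m\le\cdots\le k_1\le n$ in $G_n^{(m)}(q)$ involves the $n$-dependent factor
\[
\frac{(q)_n^2}{(q)_{n-k_1}}\cdot\frac{(1-q^{n+1})(1-q^{n+2})}{(1-q^{n-k_m+1})(1-q^{n-k_m+2})},
\]
and I would show that each of these pieces reduces to a quantity with a clean $n\to\infty$ limit in the $q$-adic topology. Concretely, $(1-q^{n+j})\equiv 1$ and $(q)_n/(q)_{n-k_1}\equiv 1 \pmod{q^{n+1}}$, and the factor $1/((1-q^{n-k_m+1})(1-q^{n-k_m+2}))$ collapses modulo $q^{n+1}$ because $k_m^2-k_m\geq 0$ shifts the error terms to order $>n$, as spelled out in the excerpt. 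After these simplifications the summand degenerates to $q^{-2k_m}q^{\sum_j(k_j^2+2k_j)}/((q)_{k_m}^2(q)_{k_1-k_2}\cdots(q)_{k_{m-1}-k_m})$, and stability together with the identity $(q)_n\to (q)_\infty$ gives equation~(\ref{G}), the right-hand side.

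The main obstacle is the justification of the termwise limit for $G_n^{(m)}(q)$: one must confirm that the $n$-adic cut-offs $k_j\le n$ become vacuous in $\mathbb{Z}[[q]]/q^{n+1}\mathbb{Z}[[q]]$ for every multi-index contributing nontrivially up to order $q^n$, and that all the auxiliary $n$-dependent $q$-factors really do reduce to unity modulo $q^{n+1}$. Once these congruences are verified uniformly in the multi-index, equating the two limits $\lim_n\Psi_n^{(m)}=\lim_n G_n^{(m)}$ produced by the identical invariant $J_{(n,0)}^{\mathfrak{sl}_3}(T(2,2m))$ yields the displayed identity, completing the proof.
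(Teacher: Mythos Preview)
Your approach is essentially identical to the paper's: both sides arise as the $n\to\infty$ limits of the two normalized expressions $\Psi_n^{(m)}$ and $G_n^{(m)}$ for the same invariant $J_{(n,0)}^{\mathfrak{sl}_3}(T(2,2m))$, and the congruences you isolate are exactly those the paper checks. One small slip: the claim $(q)_n/(q)_{n-k_1}\equiv 1\pmod{q^{n+1}}$ is false as stated, since the leading correction is $-q^{n-k_1+1}$; as the paper does, you need the prefactor $q^{k_1^2+2k_1}$ already present in the summand to push the error beyond order $n$, exactly parallel to the $k_m^2-k_m\ge 0$ shift you correctly invoke for the $1/(1-q^{n-k_m+j})$ factors.
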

The above identity is a knot-theoretical generalization of the Andrews-Gordon identities for the Ramanujan false theta function.

\begin{QUE}
Does the left-hand side of the identity of Theorem~\ref{qseries} have a representation as a well-known function like the Ramanujan false theta function?
\end{QUE}

We compute the expansion of $q$-series $\Psi^{(m)}(q)=\lim_{n\to\infty}\Psi_n^{(m)}(q)$ up to order $150$ by Mathematica~{11.0}~\cite{Mathematica} in Table~\ref{expansion}. 
It is conjectured that the collection of coefficients of the expansion of $\Psi^{(m)}(q)$ except for zeros is $\{1,{-1},{-1},1,1,1,{-1},{-1},{-1},{-1},1,1,1,1,1,\dots\}$ for $m>0$. 
Sequences of non-zero coefficients and zero coefficients appear in the expansion of $\Psi^{(m)}(q)$ alternatively. 
The $n$-th sequence of non-zero coefficients has length $4n$ and the $n$-th sequence of zero coefficients length $(2n+1)(m-2)$ for $m>0$.

\subsection*{Acknowledgment}
The author would like to express his gratitude to his adviser, Professor Hisaaki Endo, for his encouragement.

\begin{table}[p]
\begin{center}
\begin{tabular}[c]{|c|l|} \hline
$m$ & \multicolumn{1}{c|}{$\Psi^{(m)}(q)$} \\
\hline\hline
$1$ & \footnotesize{$1+O\left(q^{151}\right)$}\\ 
\hline
$2$ &  \footnotesize{$\begin{aligned}
&1-q-q^2+q^3+q^4+q^5-q^6-q^7-q^8-q^9+q^{10}+q^{11}+q^{12}+q^{13}+q^{14}-q^{15}\\
&-q^{16}-q^{17}-q^{18}-q^{19}-q^{20}+q^{21}+q^{22}+q^{23}+q^{24}+q^{25}+q^{26}+q^{27}-q^{28}-q^{29}\\
&-q^{30}-q^{31}-q^{32}-q^{33}-q^{34}-q^{35}+q^{36}+q^{37}+q^{38}+q^{39}+q^{40}+q^{41}+q^{42}+q^{43}\\
&+q^{44}-q^{45}-q^{46}-q^{47}-q^{48}-q^{49}-q^{50}-q^{51}-q^{52}-q^{53}-q^{54}+q^{55}+q^{56}+q^{57}\\
&+q^{58}+q^{59}+q^{60}+q^{61}+q^{62}+q^{63}+q^{64}+q^{65}-q^{66}-q^{67}-q^{68}-q^{69}-q^{70}-q^{71}\\
&-q^{72}-q^{73}-q^{74}-q^{75}-q^{76}-q^{77}+q^{78}+q^{79}+q^{80}+q^{81}+q^{82}+q^{83}+q^{84}+q^{85}\\
&+q^{86}+q^{87}+q^{88}+q^{89}+q^{90}-q^{91}-q^{92}-q^{93}-q^{94}-q^{95}-q^{96}-q^{97}-q^{98}-q^{99}\\
&-q^{100}-q^{101}-q^{102}-q^{103}-q^{104}+q^{105}+q^{106}+q^{107}+q^{108}+q^{109}+q^{110}+q^{111}\\
&+q^{112}+q^{113}+q^{114}+q^{115}+q^{116}+q^{117}+q^{118}+q^{119}-q^{120}-q^{121}-q^{122}-q^{123}\\
&-q^{124}-q^{125}-q^{126}-q^{127}-q^{128}-q^{129}-q^{130}-q^{131}-q^{132}-q^{133}-q^{134}-q^{135}\\
&+q^{136}+q^{137}+q^{138}+q^{139}+q^{140}+q^{141}+q^{142}+q^{143}+q^{144}+q^{145}+q^{146}+q^{147}\\
&+q^{148}+q^{149}+q^{150}+O(q^{151}) \end{aligned}$}\\
\hline
$3$ & \footnotesize{$\begin{aligned}
&1-q-q^2+q^3+q^7+q^8-q^9-q^{10}-q^{11}-q^{12}+q^{13}+q^{14}+q^{20}+q^{21}+q^{22}-q^{23}\\
&-q^{24}-q^{25}-q^{26}-q^{27}-q^{28}+q^{29}+q^{30}+q^{31}+q^{39}+q^{40}+q^{41}+q^{42}-q^{43}-q^{44}\\
&-q^{45}-q^{46}-q^{47}-q^{48}-q^{49}-q^{50}+q^{51}+q^{52}+q^{53}+q^{54}+q^{64}+q^{65}+q^{66}+q^{67}\\
&+q^{68}-q^{69}-q^{70}-q^{71}-q^{72}-q^{73}-q^{74}-q^{75}-q^{76}-q^{77}-q^{78}+q^{79}+q^{80}+q^{81}\\
&+q^{82}+q^{83}+q^{95}+q^{96}+q^{97}+q^{98}+q^{99}+q^{100}-q^{101}-q^{102}-q^{103}-q^{104}-q^{105}\\
&-q^{106}-q^{107}-q^{108}-q^{109}-q^{110}-q^{111}-q^{112}+q^{113}+q^{114}+q^{115}+q^{116}+q^{117}\\
&+q^{118}+q^{132}+q^{133}+q^{134}+q^{135}+q^{136}+q^{137}+q^{138}-q^{139}-q^{140}-q^{141}-q^{142}\\
&-q^{143}-q^{144}-q^{145}-q^{146}-q^{147}-q^{148}-q^{149}-q^{150}+O\left(q^{151}\right) \end{aligned}$}\\
\hline
$4$ & \footnotesize{$\begin{aligned}
&1-q-q^2+q^3+q^{10}+q^{11}-q^{12}-q^{13}-q^{14}-q^{15}+q^{16}+q^{17}+q^{28}+q^{29}+q^{30}\\
&-q^{31}-q^{32}-q^{33}-q^{34}-q^{35}-q^{36}+q^{37}+q^{38}+q^{39}+q^{54}+q^{55}+q^{56}+q^{57}-q^{58}\\
&-q^{59}-q^{60}-q^{61}-q^{62}-q^{63}-q^{64}-q^{65}+q^{66}+q^{67}+q^{68}+q^{69}+q^{88}+q^{89}+q^{90}\\
&+q^{91}+q^{92}-q^{93}-q^{94}-q^{95}-q^{96}-q^{97}-q^{98}-q^{99}-q^{100}-q^{101}-q^{102}+q^{103}\\
&+q^{104}+q^{105}+q^{106}+q^{107}+q^{130}+q^{131}+q^{132}+q^{133}+q^{134}+q^{135}-q^{136}-q^{137}\\
&-q^{138}-q^{139}-q^{140}-q^{141}-q^{142}-q^{143}-q^{144}-q^{145}-q^{146}-q^{147}+q^{148}+q^{149}\\
&+q^{150}+O\left(q^{151}\right) \end{aligned}$}\\
\hline
$5$ & \footnotesize{$\begin{aligned}
&1-q-q^2+q^3+q^{13}+q^{14}-q^{15}-q^{16}-q^{17}-q^{18}+q^{19}+q^{20}+q^{36}+q^{37}+q^{38}\\
&-q^{39}-q^{40}-q^{41}-q^{42}-q^{43}-q^{44}+q^{45}+q^{46}+q^{47}+q^{69}+q^{70}+q^{71}+q^{72}-q^{73}\\
&-q^{74}-q^{75}-q^{76}-q^{77}-q^{78}-q^{79}-q^{80}+q^{81}+q^{82}+q^{83}+q^{84}+q^{112}+q^{113}\\
&+q^{114}+q^{115}+q^{116}-q^{117}-q^{118}-q^{119}-q^{120}-q^{121}-q^{122}-q^{123}-q^{124}-q^{125}\\
&-q^{126}+q^{127}+q^{128}+q^{129}+q^{130}+q^{131}+O\left(q^{151}\right) \end{aligned}$}\\
\hline
$6$ & \footnotesize{$\begin{aligned}
&1-q-q^2+q^3+q^{16}+q^{17}-q^{18}-q^{19}-q^{20}-q^{21}+q^{22}+q^{23}+q^{44}+q^{45}+q^{46}\\
&-q^{47}-q^{48}-q^{49}-q^{50}-q^{51}-q^{52}+q^{53}+q^{54}+q^{55}+q^{84}+q^{85}+q^{86}+q^{87}-q^{88}\\
&-q^{89}-q^{90}-q^{91}-q^{92}-q^{93}-q^{94}-q^{95}+q^{96}+q^{97}+q^{98}+q^{99}+q^{136}+q^{137}\\
&+q^{138}+q^{139}+q^{140}-q^{141}-q^{142}-q^{143}-q^{144}-q^{145}-q^{146}-q^{147}-q^{148}-q^{149}\\
&-q^{150}+O\left(q^{151}\right)\end{aligned}$}\\
\hline
\end{tabular}
\caption{Expansion of $\Psi^{(m)}(q)$ up to order $150$ for $m=1,2,\dots,6$.}
\label{expansion}
\end{center}
\end{table}

\bibliographystyle{amsplain}
\bibliography{sl3tails}
\end{document}